\newtheorem{problem}{Problem}[section]
\newtheorem*{remark}{Remark}
\newcommand{\V}{\mathbb{V}}
\newcommand{\Mat}{Mat\'{e}rn }
\newcommand{\bigO}{\mathcal{O}}
\newcommand{\W}{\mathcal{W}}
\newcommand{\R}{\mathbb{R}}
\newcommand{\E}{\mathbb{E}}
\newcommand{\bbf}{\mathbf{f}}
\newcommand{\bn}{\mathbf{n}}
\newcommand{\bq}{\mathbf{q}}
\newcommand{\bu}{\mathbf{u}}
\newcommand{\bv}{\mathbf{v}}
\newcommand{\x}{\mathbf{x}}
\newcommand{\y}{\mathbf{y}}
\newcommand{\KL}{Karhunen-Lo\`{e}ve\ }
\newcommand{\norm}[1]{\left\Vert#1\right\Vert}
\newcommand{\abs}[1]{\left\vert#1\right\vert}
\newcommand{\curly}[1]{\left\{#1\right\}}
\newcommand{\brac}[1]{\left[#1\right]}
\renewcommand{\div}{\operatorname{div}}
\title{A Multilevel, Hierarchical Sampling Technique for Spatially Correlated
Random Fields
\footnote{T\lowercase{his work is performed under the auspices of the} U.S.
D\lowercase{epartment of} E\lowercase{nergy under} C\lowercase{ontract}
DE-AC52-07NA27344}}
\author{\normalsize Sarah Osborn\thanks{Center for Applied Scientific Computing, Lawrence Livermore National Laboratory, 7000 East Avenue, Livermore, CA 94551.} 
    ({\tt osborn9@llnl.gov}), ({\tt panayot@llnl.gov}). 
    \and Panayot S. Vassilevski\footnotemark[2] 
\and Umberto Villa\thanks{Institute for Computational Engineering and Sciences, University of Texas, Austin, TX. ({\tt uvilla@ices.utexas.edu}).}}
\begin{document}
\maketitle
\begin{abstract}
\noindent We propose an alternative method to generate samples of a spatially correlated random field with applications to large-scale problems for forward propagation of uncertainty. A classical approach for generating these samples is the \KL (KL) decomposition. However, the KL expansion requires solving a dense eigenvalue problem and is therefore computationally infeasible for large-scale problems. Sampling methods based on stochastic partial differential equations provide a highly scalable way to sample Gaussian fields, but the resulting parametrization is mesh dependent. We propose a multilevel decomposition of the stochastic field to allow for scalable, hierarchical sampling based on solving a mixed finite element formulation of a stochastic reaction-diffusion equation with a random, white noise source function. Numerical experiments are presented to demonstrate the scalability of the sampling method as well as numerical results of multilevel Monte Carlo simulations for a subsurface porous media flow application using the proposed sampling method. 
\end{abstract}

\begin{keywords}
multilevel methods, PDEs with random input data, mixed finite elements, 
uncertainty quantification, multilevel Monte Carlo
\end{keywords}

\begin{AMS}
65C05, 60H15, 35R60, 65N30, 65M75, 65C30
\end{AMS}

\section{Introduction}
\label{sec:intro}

Generating spatially correlated Gaussian random fields with specific
statistical properties is an important component and active topic of research in
a diverse array of application areas such as ecology, meteorology and geology
\cite{bolin2011spatial,cressie1993statistics,hu2013multivariate}. Gaussian
random fields are collections of random variables indexed by elements from a
multidimensional space with the property that their joint distribution is
Gaussian. These types of fields are specified by expectations and positive
semi-definite covariance functions and are generally good models for many phenomena;
see, e.g., \cite{abrahamsen1997review}.

In this paper we focus on geophysics
applications where the spatially correlated random field represents the
permeability of a porous medium. Specifically, we consider subsurface flow in a
specified domain, $D\subset \mathbb{R}^{d}$, governed by Darcy's law,
\begin{align}
{\bf q}({\bf x}) + k({\bf x})\nabla p({\bf x}) &= 0,
\label{eq:Darcy1} \\
\div {\bf q}({\bf x}) &= f({\bf x}),
\label{eq:Darcy2}
\end{align}
where ${\bf q}({\bf x})$ is the fluid velocity, $p({\bf x})$ is the fluid
pressure and $k({\bf x})$ the hydraulic conductivity, measuring the
transmissibility of the fluid through the porous medium.

In practice, only small portions of the hydraulic conductivity are known via
noisy data measurements and this contributes a large source of uncertainty into
the Darcy model. Understanding the effects that such uncertainties introduce
into the model, or forward propagation uncertainty quantification (UQ)
\cite{goodarzi2013introduction,smith2013uncertainty}, is quite important.  To
quantify this uncertainty we consider Monte Carlo methods, in particular
multilevel Monte Carlo methods (MLMC), where equations
\eqref{eq:Darcy1}-\eqref{eq:Darcy2} are solved repeatedly using different
realizations of the hydraulic conductivity field. In multilevel Monte Carlo methods,
more accurate (and expensive) simulations are run with fewer samples, 
while less accurate (and inexpensive) simulations are run with a larger number of samples. 
At the end of these simulations, quantities of interest depending on the  
velocity and/or pressure, such as the effective permeability, can be computed. 
Since a large number of Monte Carlo simulations are needed to produce accurate quantities of interest,
it is essential to have efficient algorithms to rapidly generate these
different realizations. 

A standard way of modeling the hydraulic conductivity is as a log-normal field,
$k({\bf x},\omega)= e^{\theta({\bf x},\omega)}$, where $\theta({\bf x},\omega)$ is a random field with a specified covariance
structure \cite{christakos2012modern,gelhar1993}. A classical, and well-studied
approach for generating these samples with the desired statistical properties
is the \KL decomposition. This sampling technique amounts to
solving an eigenvalue problem with the dense covariance matrix. The samples are
then different (random) linear combinations of the eigenvectors. However, sampling in
this manner suffers from a significant computational cost and high memory requirement. In fact, the
computational complexity for solving a dense eigenvalue problem grows cubically
with the size of the covariance matrix and the memory necessary to store all the eigenvectors in the \KL decomposition scales quadratically, making such methods prohibitively expensive for large-scale problems of interest. New approaches based on randomized methods and hierarchical semi-separable matrices can drastically reduce the cost of solving the eigenvalue problem (see e.g. \cite{SaibabaLeeKitanidis16}), however they allow to compute only the dominant eigenmodes of the \KL expansion and, therefore, introduce bias in the sampling.

A different approach for generating random field samples is by solving a
stochastic partial differential equation (SPDE) with a white noise source
function~\cite{whittle1954stationary,whittle1963stochastic,lindgren2011explicit}. To use the inverse of an elliptic differential operator as covariance function is a common approach for the solution of
large-scale Bayesian inverse problem governed by PDE forward models, see e.g. \cite{Stuart10,Bui-ThanhGhattasMartinEtAl13}, as it allows for efficient evaluation of the covariance operator using a fast and scalable multigrid solver.
The authors in \cite{lindgren2011explicit} provide a link between \KL
sampling from a Mat\'ern distribution and SPDE sampling, further motivating the SPDE approach. For example,
one can solve the following reaction-diffusion equation
\begin{equation}
-\Delta \theta ({\bf x},\omega) + \kappa^2 \theta ({\bf x},\omega)= \mathcal{W}({\bf
x},\omega)
\label{eq:ReactionDiffusion}
\end{equation}
where $\theta({\bf x},\omega)$ is the logarithm of the hydraulic conductivity,
$\kappa^2$ is a constant depending on the correlation length, and
$\mathcal{W}({\bf x},\omega)$ is a white noise function. This sampling approach
has two significant benefits: the first is avoiding the computational cost
incurred by solving a dense eigenvalue problem and the second is that optimal
solution methods for solving sparse linear systems arising from the finite
element discretization of \eqref{eq:ReactionDiffusion} can be applied. Despite
the benefits of this approach, it is not without imperfections as generated
samples contain artificial boundary effects.

Compared to the work in \cite{lindgren2011explicit}, three new ideas are introduced in this paper:
\emph{i}) a mixed discretization of the SPDE, \emph{ii}) a hierarchical version of the sampler, and \emph{iii}) mitigation of boundary artifacts using embedded domains.
Specifically, given a general unstructured fine grid, we construct a hierarchy of
algebraically coarsened grids and finite element spaces using the element-based algebraic multigrid techniques (AMGe) presented in  \cite{lashuk2012element,lashuk2014construction}, which provide coarse spaces with improved approximation properties than the ones from the original work \cite{pasciak2008exact}.
Then we generate random field samples at each level of the hierarchy by 
solving linear systems arising from the mixed discretization of \eqref{eq:ReactionDiffusion}. This allows us to
compute our (piecewise constant) samples in a hierarchical fashion (as needed in MLMC simulations), while leveraging existing scalable methods and software for
solving deterministic PDEs. 
We remark that while a hierarchical sampler can be constructed in a similar way for the primal formulation of the SPDE using geometric multigrid hierarchies, our framework offers more flexibility with respect to the geometry of the physical domain (since it does not require a sequence of nested grids). In addition, the computational advantages of resorting to the mixed formulation are twofold. 
First, the finite element discretization of the SPDE
\eqref{eq:ReactionDiffusion} requires the assembly of the square root of a mass matrix. In \cite{lindgren2011explicit}, the use of a continuous Galerkin (CG) finite element space in the discretization of the primal formulation leads to a non-diagonal mass matrix and mass lumping is used to make computation of the square-root tractable at the cost of accuracy.
On the contrary, the finite element pair of lowest Raviart-Thomas and piecewise constant functions in the mixed formulation leads to a diagonal mass matrix for the variable $\theta$. Second, in the AMGe framework, the construction of $L^2$ and $H(\div)$-conforming spaces (required in the mixed formulation) on agglomerated meshes is simpler than the one of $H^1$-conforming spaces (required by the primal formulation), and also leads to sparser, i.e. faster to apply, grid transfer operators.

Finally, we can not stress enough that performing realistic simulations with MLMC requires a considerable computational cost. To make the method
more feasible in practice, parallelism must be fully exploited. 
Our work focuses solely on parallelism across the spatial domain in computing realizations
of the input random field, then performing the subsequent solve of the model of interest. 
Further parallelism could be added using the scheduling approaches suggested in \cite{GmeinerDRSW16},
where the authors investigate the complex task 
of scheduling parallel tasks within and across levels of MLMC. 

The paper is structured as follows. In Section \ref{sec:gaussian-sampling} we give an
overview of both the classical KL expansion and the SPDE sampling approach.
The hierarchical SPDE sampling procedure is introduced and examined in Section \ref{sec:ml-sampling}.
Section \ref{sec:mlmc} contains a brief review of the
multilevel Monte Carlo method (MLMC), a 
scalable alternative to standard Monte Carlo methods that uses the solution of the PDE on a hierarchy of grid to effectively reduce the variance of the estimator. Here, we also present our
numerical results when our proposed sampling technique is used inside the MLMC
method. Lastly, Section \ref{sec:conclusions} contains our concluding remarks.
 
\section{Sampling from Gaussian Random Fields} 
\label{sec:gaussian-sampling}
In this section we review two ways of generating samples of a log-normal random
field. These are collections of random variables, $\{ \theta({\x},\omega):~{\x}\in D,~\omega\in \Omega \}$, where $\Omega$ is the sample space for the
probability space $(\Omega,\mathcal{F},\mathbb{P})$. For a fixed point, ${\bf
x}_{0}\in D$, $\theta({\x}_{0},\omega)$ is a random variable. For a fixed
$\omega_{0}\in \Omega$, $\theta({\x},\omega_{0})$ is a deterministic function
called a realization or sample of the random field.  

Random fields with a particular covariance functions are used to model the
correlation between points in spatial data. In geostatistical applications, the
logarithm of the hydraulic conductivity, \sloppy{${\theta({\x},\omega)=\log{(k({\bf x},\omega))}}$} is modeled as a random field with a
specific class of covariance function. 

In particular, we consider a stationary isotropic Gaussian field with the widely used class
of \Mat covariance functions~\cite{matern1986spatial}, given by
\begin{equation}
    \operatorname{cov}(\x, \y) = \frac{\sigma^{2}}{2^{\nu -1}\Gamma(\nu)} (\kappa \Vert
    {\bf y}-{\bf x}\Vert)^{\nu}K_{\nu}(\kappa \Vert {\bf y}-{\bf x}\Vert),
    \label{eq:Matern}
\end{equation}
where $\sigma^{2}$ is the marginal variance, $\nu>0$ determines the mean-square
differentiability of the underlying process, $\kappa>0$ is a scaling factor
inversely proportional to the correlation length, $\Gamma(\nu)$ is the gamma function, 
and $K_{\nu}$ is the modified
Bessel function of the second kind. When 
$\nu=\frac{1}{2}$ \eqref{eq:Matern} reduces to the common exponential covariance
function, given by 
\begin{equation}\label{eq:exp_cov_func}
    \operatorname{cov}(\x,\y) = \sigma^2 e ^{-\kappa \norm{\y - \x}} .
\end{equation}
In this section we consider two ways
for generating such realizations or samples of random fields. The first method
we consider is the classical \KL expansion. The second is based
on solving a particular stochastic reaction-diffusion equation with a
white-noise source function.

\subsection{The \KL Expansion}
The \KL (KL) expansion of a second-order random field \footnote{ A random field
    $\theta(\x,\omega)$ is \emph{second-order} if
for each $\x \in D$ the random
variable $\theta(\x)$ has finite variance.} provides a series representation using the orthonormal basis provided by the eigenfunctions of the underlying covariance operator~\cite{loeve1978probability}.
For a bounded regular domain $D$ we define the convolution operator
            $$\mathcal{C}v(\x)=\int_{D}\operatorname{cov}(\x,\x')v(\x')d\x'.$$
Since $\mathcal{C}$ is a compact operator, the eigenvalue problem
\begin{equation}\label{eq:ew}
    (q,\mathcal{C}v)_{D}=\lambda(q,v)_{D}\quad \forall q\in L^2(D), 
\end{equation}
            admits a countable sequence of eigenpairs $(\lambda_{i},v_{i})$
            where $\lim_{i\rightarrow\infty}\lambda_{i}=0.$ 
A Gaussian random field can be expanded as
          $$\theta(\x,\omega)=\sum_{i=0}^{\infty}\xi_{i}(\omega)\sqrt{\lambda_{i}}v_{i}(\x),\quad \xi_{i}(\omega)\sim N(0,\sigma^{2})\;\text{i.i.d.} $$

          In practice, a discrete version of the eigenvalue problem \eqref{eq:Matern} is computed. A triangulation of the domain $\mathcal{T}_h \subset D$ is generated with the discrete function space $\Theta_h \subset L^2(D)$ of piecewise constant functions. Then, we can compute realizations of the Gaussian field with a \KL expansion truncated after $M_h$ terms as 
          \begin{equation}
              \theta_{h}(\omega)=\sum_{i=0}^{M_{h}}\xi_{i}(\omega)\sqrt{\lambda_{h,i}}v_{h,i}(\x),\quad \xi_{i}(\omega)\sim N(0,\sigma^{2})\;\text{i.i.d}, 
      \end{equation}
      where the  pairs $(\lambda_{i},v_{i})$ solve the generalized eigenvalue
problem:
            $$(q_{h},\mathcal{C}_{h}v_{h,i})_{h}=\lambda_{h,i}(q_{h},v_{h,i})_{h}$$
        with inner product defined as $(r,s)_h=\int_{\mathcal{T}_h}r\,s \, d\x$ for $r,s \in \Theta_h.$ 

One of the main reasons for sampling using this approach is that any truncation
of the KL expansion gives a sample with the minimal mean square error, see e.g.
\cite{keating1983note}. However, this method is not computationally feasible
due to the high computational cost for solving a dense eigenvalue problem. On a
mesh with $N$ degrees of freedom, the cost to factor a dense covariance matrix
is $\mathcal{O}(N^{3})$. This makes generating samples in this manner
impractical for finely resolved meshes where the number of degrees of freedom
could be of the order of millions. This motivates other sampling methods that
do not suffer from such a complexity cost.

\subsection{Stochastic PDE Sampling}
\label{sec:PDESampling}

An important link between Gaussian fields and Gaussian Markov random fields is established in \cite{lindgren2011explicit}, where a random process on $\R^d$ with a \Mat covariance function can be obtained as the solution of a particular stochastic partial differential equation (SPDE). This provides an alternative method for computing the desired samples of a Gaussian random field via a SPDE, as opposed to the computationally intensive KL expansion. 

Realizations of a Gaussian random field with an underlying \Mat covariance, $\theta({\bf x},\omega)$,  solve the following linear stochastic PDE:
\begin{equation}
(\kappa^{2} - \Delta)^{\alpha/2}\theta(\x,\omega) = g \W({\bf x},\omega) \quad  \x\in \R^{d} , \ \alpha = \nu +\frac{d}{2}, \ \kappa> 0, \nu >0,
\label{eq:Fractional}
\end{equation}
for some other realization of the standard Gaussian white noise function $\W$
with scaling factor $g$, \cite{whittle1954stationary,whittle1963stochastic}. The scaling factor is chosen to be
\begin{equation*}
    g = (4\pi)^{d/4} \kappa^{\nu}  \sqrt{\frac{\Gamma\left(\nu + d/2\right)}{\Gamma(\nu)}},
\end{equation*}
to impose unit marginal variance of $\theta(\x,\omega)$~\cite{bolin2011spatial}. 

It is worth noting that if $\nu=1$ in two dimensions, and $\nu=\frac{1}{2}$ in three dimensions, then \eqref{eq:Fractional} reduces to the following standard reaction-diffusion equation,

\begin{equation}
    (\kappa^{2}-\Delta)\theta(\x, \omega) = g\mathcal{W}(\x,\omega).
\label{eq:SPDE}
\end{equation}
Additionally in three dimensions, realizations of a Gaussian random field with
exponential covariance are solutions of \eqref{eq:SPDE}.
Thus a scalable sampling alternative is equivalent to efficiently solving 
the stochastic reaction-diffusion equation given by
\eqref{eq:SPDE} where we are able to leverage existing scalable solution strategies.  
We remark that, as explained in \cite{lindgren2011explicit}, samples from \Mat
distributions with $\nu = 2k + 1$ (in two spatial dimensions) and from \Mat
distributions with $\nu = 2k + \frac{1}{2}$ (in three spatial dimensions), can be obtained by recursively solving $(\kappa^{2}-\Delta)\theta_{i+1}(\x, \omega) = \theta_{i}(\x, \omega)$ for $i = 1, \ldots, k$, where $\theta_{0}(\x, \omega)$ is the solution of \eqref{eq:SPDE}.

Equation \eqref{eq:SPDE} is discretized using a mixed finite element method, \cite{boffi2013mixed,fortin1991mixed}.

Following standard notation, for scalar functions $\theta, q\in L^2(D)$ and vector functions $\bu, \mathbf{v} \in \mathbf{L}^2(D)=[L^2(D)]^d$, we define the inner products:
$$ (\theta,q) = \int_D \theta\, q \;d\x \mbox{   and    } (\bu, \mathbf{v})=\int_D \bu \cdot \mathbf{v}\; d\x.$$
We also define the functional spaces $\mathbf{R}$ and $\Theta$ as 
$$
\begin{array}{lcr}
    \mathbf{R} = H(\div;D) :=\curly{\bu \in \mathbf{L}^2(D) \mid \operatorname{div}\, \bu \in L^2(D) \text{ and } \bu \cdot \mathbf{n} = 0 \text{ on }\partial D}\text{ and } \\ \Theta = L^2(D).
\end{array}
$$
Finally, we introduce the bilinear forms
\begin{equation*}
\begin{array}{lll}
m(\bu, \bv)  &:=(\bu, \bv)       & \forall \, \bu, \bv \in \mathbf{R},\\
w(\theta, q) &:= (\theta, q)     & \forall \, \theta, q \in \Theta,\\
b(\bu, q)    &:= (\div\, \bu, q) & \forall \, \bu \in \mathbf{R}, q \in \Theta,\\
\end{array}
\end{equation*}
and the linear form
\begin{equation*}
F^\mathcal{W}(q) := (\mathcal{W}, q) \quad \forall \, q \in \Theta.
\end{equation*}

Following standard finite element techniques, let $\mathbf{R}_h \subset \mathbf{R}$ denote the lowest order Raviart-Thomas finite element space and $\Theta_h \subset \Theta$ denote the finite element space of piecewise constant functions. 
Then, a mixed finite element discretization of \eqref{eq:SPDE} reads
\begin{problem}
Find $(\bu_h, \theta_h)\in \mathbf{R}_h \times \Theta_h$ such that
    \begin{equation}
       \begin{array}{ll}
            m(\bu_h, \bv_h) + b(\bv_h, \theta_h) = 0 & \forall \bv_h \in \mathbf{R}_h \\
            b(\bu_h, q_h) - \kappa^2\, w(\theta_h, q_h) = -g\, F^\mathcal{W}(q_h) & \forall q_h \in \Theta_h.
        \end{array}\label{eq:spde_weak}
    \end{equation}
    with essential boundary conditions $\bu_h \cdot \mathbf{n}=0$.
    \label{prob:spde_weak}
\end{problem}
\begin{remark}
The choice of a low order finite element discretization is optimal with respect to the regularity of the solution. For example, in 3D space and for $\nu = \frac{1}{2}$ the realizations of a Gaussian random field with \Mat covariance are only \emph{almost surely} H\"older continuous with any exponent $\beta < \frac{1}{2}$, see e.g. \cite{charrier2012strong}.
\end{remark}

In the following, we denote the discrete linear algebra representations of the bilinear forms $m$, $b$, and $w$ with the matrices $M_h$, $W_h$, and $B_h$ where $M_h$ is the mass matrix for the space $\mathbf{R}_h$, $W_h$ is the mass matrix for the space $\Theta_h$, and $B_h$ stems from the divergence operator. Particular care must be taken for the
linear algebra representation of the stochastic right hand side $F^\mathcal{W}(q_h)$. This
requires recalling the following two properties of Gaussian white noise defined
on a domain $D$. For any set of test functions $\curly{q_i \in L^2(D), i=1,\dots,n},$
the expectation and covariance measures are given by
\begin{align}
    \mathbb{E}[ (q_{i},\mathcal{W})] &= 0, \\
    \operatorname{cov}\left( (q_{i},\mathcal{W}), (q_{j}, \mathcal{W})\right)
    &= (q_{i},q_{j}).
\end{align}
By taking $q_{i},q_{j}$ as piecewise constants so that $q_{i},q_{j}\in \Theta_h$, the
second equation implies that the covariance measure over a region of the domain
is equal to the area of that region \cite{lindgren2011explicit}.
 
As a result of these properties, the computation of the discrete stochastic linear
functional amounts to computing
\begin{equation*}
    f_h = W_h^{\frac{1}{2}}\xi_h(\omega),
\end{equation*}
where the coefficients of $\xi_h(\omega)$ in the finite element expansion form a random vector drawn from $\mathcal{N}(0,I)$.
It should be noted that the mass matrix $W_h$ for the space $\Theta_h$ is diagonal, hence its square root can be computed cheaply.
This is not the case in the original primal formulation presented in \cite{lindgren2011explicit} since they employ piecewise linear continuous elements. The authors suggest then using mass lumping to make computation feasible at the cost of accuracy.

Then the discrete mixed finite element problem can be written as the linear system,  
\begin{equation} 
    \mathcal{A}_hU_h = 
    \begin{array}{lcr}
        \begin{bmatrix} 
            M_h & B_h^T \\ 
        B_h & -\kappa^2 W_h \end{bmatrix} 
            \begin{bmatrix} \bu_h  \\ 
                \theta_h 
            \end{bmatrix} 
            = \begin{bmatrix} 0  \\ -g\,f_h(\omega)
            \end{bmatrix} = F_h,
            
        \end{array} \label{eq:linsys}
    \end{equation}
where $f_h(\omega) \sim \mathcal{N}(0,W_h).$
An efficient iterative solution method of \eqref{eq:linsys}, required to scalably generate the desired Gaussian field realizations, is described in the following section.

It is worth noticing that the lowest order Raviart-Thomas spaces allow us to define a discrete gradient
$\nabla_h: \Theta_h \mapsto \mathcal{R}_h$ such that the identity $w(\nabla_h \theta_h, \bv_h) = -b(\bv_h, \theta_h)$ holds for all $\theta_h \in \Theta_h$ and all $\bv_h \in \mathcal{R}_h$ with zero normal trace on $\partial D$. Specifically, we define $\nabla_h = -M_h^{-1}B^T_h$.
Then, the Schur complement of \eqref{eq:linsys} with respect to $\theta_h$ can be viewed as a non-local
{\em discontinuous} Galerkin (interior penalty) discretization of the original PDE \eqref{eq:SPDE}, cf., \cite{RVW}. Namely,  we have
\begin{equation*}
m(\nabla_h \theta_h,\; \nabla_h q_h) + \kappa^2\, w(\theta_h,\;q_h) = g F^W(q_h), \quad \forall\, q_h \in \Theta_h.
\end{equation*}
The above formulation allows us to apply the theory in
\cite{lindgren2011explicit} to show convergence of $\theta_h$ to $\theta$,
however a detailed analysis of the convergence of the mixed system
\eqref{eq:spde_weak} is outside the scope of this work. 

\subsection{SPDE Sampler Numerical Solution}\label{sec:spde_solver}
The sparse large matrix $\mathcal{A}_h$ admits the following block UL decomposition
\begin{equation}
\mathcal{A}_h = \begin{bmatrix}A_h & B_h^T \\ 0 & -\kappa^2 W_h \end{bmatrix}\begin{bmatrix} I & 0 \\ -\kappa^{-2}W_h^{-1}B_h & I \end{bmatrix},
\end{equation}
where the blocks $A_h = M_h + \kappa^{-2}B_h^T W_h^{-1}B_h$ and $-\kappa^{-2}W_h^{-1}B_h$ are both sparse since $W_h$ is diagonal.
Then solving the large sparse linear system ~\eqref{eq:linsys} amounts to first
finding $\bu_h$ such that
\begin{equation}\label{eq:schur}
 A_h \bu_h = - g\kappa^{-2} B_h^T f_h(\omega),
\end{equation}
and then to set
\begin{equation}
\theta_h = \kappa^{-2}W_h^{-1}\left( B_h \bu_h + g\,f_h(\omega) \right).
\end{equation}
It is worth noting that $A_h$ in \eqref{eq:schur} is symmetric positive definite and stems from the matrix representation of the weighted $H(\div)$ inner product,
$$ a(\bu_h, \bv_h) := (\bu_h, \bv_h) + \kappa^{-2}( \div\, \bu_h, \div \, \bv_h ),$$
thanks to the particular choice of the spaces $\mathbf{R}_h$ and $\Theta_h$, see e.g. \cite{arnold2006finite,arnold2010finite}.
Then to solve the linear system \eqref{eq:schur}, we use the conjugate gradient (CG) method preconditioned by the Auxiliary Space AMG preconditioner for $H(\operatorname{div})$ problems in~\cite{kolev2012parallel}, which ensures a mesh independent convergence and robustness with respect to the choice of the correlation length $\kappa$. In particular, for our numerical results, we use hypre's HypreADS preconditioner~\cite{hypre}.

This approach allows for scalable sampling, but the parametrization of
$\theta_h(\omega)$ is mesh-dependent.
This is not suitable for MLMC where the same realization of the Gaussian field
needs to be computed at a fine and a coarse spatial resolution. In Section \ref{sec:ml-sampling}, we detail our 
proposed method that allows for hierarchical sampling based on a multilevel decomposition of the stochastic field.

\subsection{Boundary artifacts and embedded domains}
\label{sec:boundaryArtifacts}
As noted in \cite{lindgren2011explicit}, the SPDE sampling method introduces errors on the boundary resulting in a larger marginal variance of the random field close to the boundary of the domain. These errors are due
  to the introduction of artificial boundary conditions where the SPDE is
  defined globally, yet must be discretized on a finite domain. To mitigate this issue we embed the original mesh into a larger mesh. Equation \eqref{eq:Fractional}
is discretized on the larger domain $\overline D$ where a corresponding linear system of the form 
\eqref{eq:linsys} is solved 
and the corresponding random field realization is restricted to the original domain $D$. A rule of thumb proposed in \cite{lindgren2015bayesian} suggests the boundary effect is negligible at a distance equal to the correlation length from the boundary. Figure \ref{fig:mar_var} shows the sample marginal variance of 2000 Gaussian field samples with correlation length $0.01$, marginal variance $\sigma^2=1$ generated by the SPDE sampler. In Figure~\ref{subfig:orig}, the samples were computed using the original circular mesh with diameter equal $0.2$, whereas in Figure~\ref{subfig:embed} the SPDE given by \eqref{eq:Fractional}
is solved on the original mesh embedded in a larger square domain with sides of length $0.3$ shown in Figure~\ref{subfig:embed_mesh}. Thus, mesh embedding alleviates the issue of the error on the boundary.

\begin{figure}[htbp]
     \centering
     \begin{tabular}{c c c c}
         \subfloat[Sample Computed with Original Mesh]{\includegraphics[scale=.14]{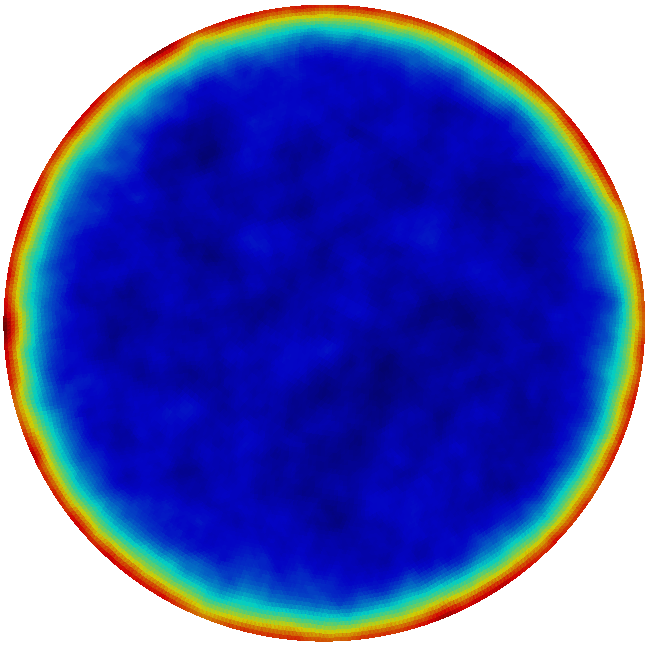}\label{subfig:orig}}
         &
         \subfloat[Original Mesh Embedded in Square]{\includegraphics[scale=.21]{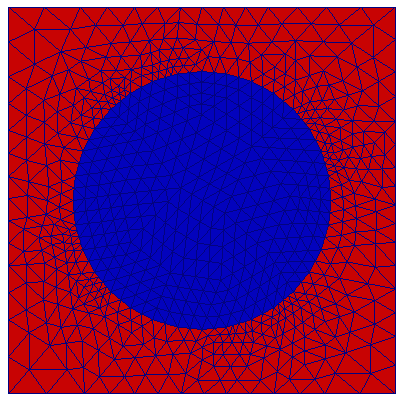}\label{subfig:embed_mesh}}
         &
         \subfloat[Samples Computed with Mesh Embedding]{\includegraphics[scale=.14]{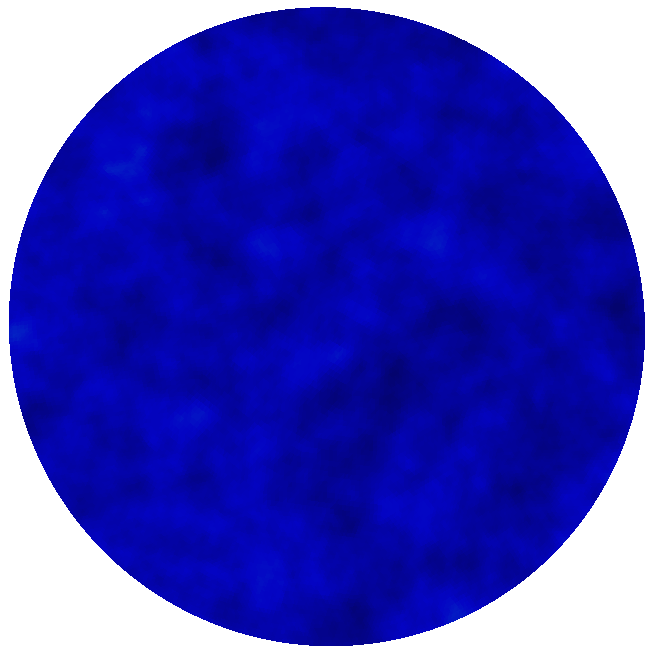}\label{subfig:embed}}
         &
         \subfloat{\includegraphics[scale=.19]{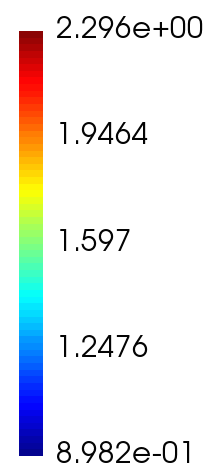}}

     \\\end{tabular}
     \caption{\small Sample marginal variance with 2000 samples of a Gaussian field with \Mat covariance function with correlation length $0.01$, marginal variance $\sigma^2=1$ computed using the SPDE sampling technique. (a) shows the marginal variance with the original circular mesh with diameter $0.2$. (b) shows the original circular mesh embedded in a square with sides of length $0.3$, and (c) shows the marginal variance when the mesh embedding procedure is used, noting that the boundary effect is negligible at a distance equal to the correlation length from the boundary. The error on the boundary from the artificial boundary conditions is mitigated when the domain is embedded in a larger mesh.} 
    \label{fig:mar_var}
 \end{figure}
 
\section{Multilevel, Hierarchical Sampling}
\label{sec:ml-sampling}

In this section we describe our proposed hierarchical sampling technique based on a multilevel decomposition of the stochastic field for sampling Gaussian fields based on solving the SPDE given by~\eqref{eq:SPDE}. We first briefly describe the agglomeration process of a fine grid into a sequence of coarser levels, introducing the necessary finite element spaces and inter-level operators that will be used. After introducing the multilevel structure of the stochastic field, the implementation details of the proposed method are described, followed by numerical results demonstrated the scalability of the method. 

\subsection{Multilevel Structure}
Using methodology from element-based algebraic multigrid (AMGe), we are able to construct operator-dependent coarse spaces with guaranteed approximation properties on general, unstructured grids, see ~\cite{lashuk2014construction,lashuk2012element,pasciak2008exact} for further details.

Let $\mathcal{T}_{0}$ denote a fine grid discretization of the domain $D$. This fine grid is agglomerated into a hierarchy of $L$ coarser algebraic levels, $\{\mathcal{T}_{\ell}\}_{\ell=1}^{L}$ where $L$ denotes the coarsest grid. Agglomerates are formed by grouping together fine-grid elements.
Based upon this hierarchy, we have the sequence of spaces $\mathbf{R}_{\ell},\Theta_{\ell}$ for $\ell=0,\dots, L$, that are the discrete analogues for $H(\operatorname{div},D)$ and $L^{2}(D)$ respectively on the discretizations $\{\mathcal{T}_{\ell}\}_{\ell=0}^{L}$. The space $\mathbf{R}_{0}$ is discretized by lowest order Raviart-Thomas finite elements and $\Theta_{0}$ is discretized by piecewise constant finite elements.

In addition, we will define the following interpolation operators from the coarse space $\Theta_{\ell+1}$ to fine space $\Theta_{\ell}$ where,
\begin{equation}
P_{\theta}: \Theta_{\ell+1}\rightarrow \Theta_{\ell} \quad \text{for} \ \ell=0,\ldots, L-1.
\end{equation}
We also define the operators from the coarse space $\mathbf{R}_{\ell+1}$ to the fine space $\mathbf{R}_{\ell}$ as
\begin{equation}
    P_{\mathbf{u}}: \mathbf{R}_{\ell+1}\rightarrow \mathbf{R}_{\ell} \quad \text{for} \ \ell=0,\ldots, L-1.
\end{equation}
Details on the construction and properties of the interpolation operators $P_{\theta}$ and $P_{\mathbf{u}}$ using AMGe techniques are given in \cite{pasciak2008exact, lashuk2012element, lashuk2014construction}. 
Here we limit our discussion to observe that such operators reduce to the canonical interpolation operators of geometric multigrid when a nested hierarchy of uniformly refined meshes is given (in our case of constant PDE coefficients).
In addition, we define  
$$ \mathcal{P} = \begin{bmatrix} P_{\mathbf{u}} & 0 \\ 0 & P_{\theta} \end{bmatrix}, $$
as the block interpolation operator for the matrix $\mathcal{A}_{\ell}.$ 
Using this hierarchical notation, a discrete realization of a Gaussian random field on level $\ell$ corresponding to the $\mathcal{T}_{\ell}$ discretization will be denoted as $\theta_{\ell}$. 
\subsection{Hierarchical SPDE Sampler}

We now propose a multilevel, decomposition of the random field and show 
that the coarse representation is a realization of the Gaussian random 
field on the coarse space $\Theta_{\ell+1}.$

\begin{proposition}
    The Gaussian random field $\theta_{\ell}(\omega)$ given by
$$ \begin{bmatrix} \mathbf{u}_{\ell} \\ \theta_{\ell}(\omega) \end{bmatrix} = \mathcal{A}^{-1}_{\ell} \begin{bmatrix} 0 \\ -gW^{1/2}_{\ell}\xi_{\ell}(\omega)\ \end{bmatrix}$$
    admits the following two-level decomposition:
    \begin{equation}\label{eq:bileveld}
        \theta_{\ell}(\omega)=P_{\theta}\theta_{\ell+1}(\omega)+\delta\theta_{\ell}(\omega),
    \end{equation}
    where $\theta_{\ell+1}(\omega)$ is a coarse representation of a Gaussian random field from
    the same distribution, and 
    \begin{equation}
    \begin{bmatrix} 
        \mathcal{A}_{\ell} & \mathcal{A}_{\ell}\mathcal{P} \\ 
        \mathcal{P}^T\mathcal{A}_{\ell} & 0 
    \end{bmatrix} 
    \begin{bmatrix} 
        \delta U_{\ell}  \\ 
        U_{\ell+1}
    \end{bmatrix} 
    = \begin{bmatrix} 
        \mathcal{F}_{\ell}  \\ 0
      \end{bmatrix},
      \label{eq:ml_sampling}
\end{equation}
with the block expressions given by 
$$ \delta U_{\ell} = \begin{bmatrix} \delta \mathbf{u}_{\ell} \\ \delta \theta_{\ell}(\omega) \end{bmatrix}, 
\; U_{\ell+1}=  \begin{bmatrix} \mathbf{u}_{\ell+1} \\  \theta_{\ell+1}(\omega) \end{bmatrix}, \mbox{ and }
\mathcal{F}_{\ell}= \begin{bmatrix} 0 \\ -g W_{\ell}^{1/2}\xi_{\ell}(\omega) \end{bmatrix}. $$
\end{proposition}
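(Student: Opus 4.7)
The plan is to read off both parts of the claim directly from the saddle-point system \eqref{eq:ml_sampling}: the first block row will give the additive two-level splitting, and the second block row (together with a Galerkin identification of the coarse operator and of the coarse mass matrix) will show that the coarse component is itself a level-$(\ell+1)$ SPDE sample from the same distribution. The first block row reads $\mathcal{A}_\ell \delta U_\ell + \mathcal{A}_\ell \mathcal{P} U_{\ell+1} = \mathcal{F}_\ell$; inverting $\mathcal{A}_\ell$ yields $\mathcal{A}_\ell^{-1}\mathcal{F}_\ell = \delta U_\ell + \mathcal{P} U_{\ell+1}$, and by the defining equation for $U_\ell$ in the statement the left-hand side is exactly the fine-level solution $U_\ell = [\mathbf{u}_\ell,\, \theta_\ell(\omega)]^T$. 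Since $\mathcal{P} = \mathrm{diag}(P_{\mathbf{u}}, P_\theta)$ is block diagonal, extracting the scalar component gives $\theta_\ell(\omega) = P_\theta \theta_{\ell+1}(\omega) + \delta\theta_\ell(\omega)$, which is the decomposition \eqref{eq:bileveld}.

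Next I would use the second block row, $\mathcal{P}^T \mathcal{A}_\ell \delta U_\ell = 0$, which asserts that the correction is $\mathcal{A}_\ell$-orthogonal to the coarse subspace $\mathrm{range}(\mathcal{P})$. Applying $\mathcal{P}^T \mathcal{A}_\ell$ to both sides of $U_\ell = \mathcal{P} U_{\ell+1} + \delta U_\ell$ and using this orthogonality produces $(\mathcal{P}^T \mathcal{A}_\ell \mathcal{P}) U_{\ell+1} = \mathcal{P}^T \mathcal{A}_\ell U_\ell = \mathcal{P}^T \mathcal{F}_\ell$. The Galerkin identity $\mathcal{A}_{\ell+1} = \mathcal{P}^T \mathcal{A}_\ell \mathcal{P}$ is the definition of the coarse mixed operator in the AMGe construction used here; since $\mathbf{R}_{\ell+1}\subset\mathbf{R}_\ell$ and $\Theta_{\ell+1}\subset\Theta_\ell$ are nested, this coarse operator also coincides with the coarse-level assembly of the bilinear forms $m$, $b$, $w$. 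Hence $\mathcal{A}_{\ell+1}U_{\ell+1} = \mathcal{P}^T\mathcal{F}_\ell = [0,\, -g\, P_\theta^T W_\ell^{1/2}\xi_\ell(\omega)]^T$, so $U_{\ell+1}$ solves a mixed system of the same form as the fine one, provided the right-hand side is identified in distribution with $-g\, W_{\ell+1}^{1/2}\xi_{\ell+1}(\omega)$.

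The main delicate point will be this last identification in distribution. The random vector $P_\theta^T W_\ell^{1/2}\xi_\ell(\omega)$ is mean-zero Gaussian with covariance $P_\theta^T W_\ell P_\theta$. Because the piecewise-constant coarse space $\Theta_{\ell+1}$ is a genuine subspace of $\Theta_\ell$ (every coarse basis function is a sum of fine element indicators on the agglomerate), the coarse mass matrix is inherited by Galerkin projection: $W_{\ell+1} = P_\theta^T W_\ell P_\theta$, which one can verify directly since the diagonal entry corresponding to an agglomerate $K$ collapses to $\sum_{k \subset K} |k| = |K|$. Consequently $P_\theta^T W_\ell^{1/2}\xi_\ell(\omega)$ has the same law as $W_{\ell+1}^{1/2}\xi_{\ell+1}(\omega)$ with $\xi_{\ell+1} \sim \mathcal{N}(0,I)$, so $\theta_{\ell+1}(\omega)$ is a coarse-level sample from the same Gaussian distribution. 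This identity is the only step that uses the specific structure of the AMGe hierarchy (nestedness of the $L^2$-conforming spaces); everything else is algebraic manipulation of the saddle-point system.
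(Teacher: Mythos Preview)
Your proof is correct and follows essentially the same route as the paper's: the paper packages the manipulations of the two block rows into a block LU factorization of the operator in \eqref{eq:ml_sampling} (with Schur complement $\mathcal{S}=\mathcal{P}^T\mathcal{A}_\ell\mathcal{P}=\mathcal{A}_{\ell+1}$), but the resulting three identities $\mathcal{A}_\ell U_\ell=\mathcal{F}_\ell$, $\mathcal{A}_{\ell+1}U_{\ell+1}=\mathcal{P}^T\mathcal{F}_\ell$, and $\delta U_\ell+\mathcal{P}U_{\ell+1}=U_\ell$ are exactly what you derive directly. The identification of the coarse noise is also the same: the paper sets $\xi_{\ell+1}(\omega):=W_{\ell+1}^{-1/2}P_\theta^T W_\ell^{1/2}\xi_\ell(\omega)$ and checks its covariance is the identity using $W_{\ell+1}=P_\theta^T W_\ell P_\theta$, which is your distributional argument made pointwise.
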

\begin{proof}
We start with a block LU factorization of the operator in \eqref{eq:ml_sampling}
\begin{equation*}
\begin{bmatrix} 
        \mathcal{A}_{\ell} & \mathcal{A}_{\ell}\mathcal{P} \\ 
        \mathcal{P}^T\mathcal{A}_{\ell} & 0 
\end{bmatrix}
\begin{bmatrix} 
        \delta U_{\ell}  \\ 
        U_{\ell+1}
\end{bmatrix} 
 =
\begin{bmatrix} 
    \mathcal{A}_{\ell} & 0 \\ 
     \mathcal{P}^T\mathcal{A}_{\ell} & -\mathcal{S} 
\end{bmatrix}
\begin{bmatrix} 
        I_{\ell} & \mathcal{P} \\ 
        0 & I_{\ell+1}
\end{bmatrix}
 \begin{bmatrix} 
        \delta U_{\ell}  \\ 
        U_{\ell+1}
\end{bmatrix}
=
\begin{bmatrix} 
        \mathcal{F}_{\ell}  \\ 0
\end{bmatrix}
,
\end{equation*}
where $I_{\ell}$ and $I_{\ell+1}$ are the identity matrices at level $\ell$ and $\ell+1$.
Here the Schur complement operator $\mathcal{S}$ is defined as
$$\mathcal{S} := (\mathcal{P}^T\mathcal{A}_{\ell}) \mathcal{A}_{\ell}^{-1} (\mathcal{A}_{\ell}\mathcal{P}) = \mathcal{P}^{T}\mathcal{A}_{\ell}\mathcal{P} = \mathcal{A}_{\ell+1},$$
where, by definition of the prolongation operator, $\mathcal{A}_{\ell+1}$ is the coarse operator stemming from the discretization of the SPDE operator at level $\ell + 1$.
Recalling the definition of $U_{\ell}$, the above LU factorization implies
\begin{equation}\label{eq:lu-twolevel}
\begin{array}{ll}
\mathcal{A}_{\ell} U_{\ell} &= \mathcal{F}_\ell\\
\mathcal{A}_{\ell+1} U_{\ell+1} &= \mathcal{P}^T\mathcal{A}_{\ell} U_{\ell} = \mathcal{P}^T \mathcal{F}_\ell\\
\delta U_{\ell} + \mathcal{P}U_{\ell+1} &= U_{\ell}.
\end{array},
\end{equation}

To show that $\theta_{\ell+1}(\omega)$ is a realization of a Gaussian random field
 on the coarse space $\Theta_{\ell+1},$ 
we observe that $\theta_{\ell+1}(\omega)$ satisfies
$$ \mathcal{A}_{\ell+1} \begin{bmatrix} {\mathbf u}_{\ell+1} \\ \theta_{\ell+1}(\omega) \end{bmatrix}
    = \mathcal{P}^T \begin{bmatrix} 0 \\ -gW_{\ell}^{1/2} \xi_{\ell}(\omega) \end{bmatrix} = \begin{bmatrix} 0 \\ -gW_{\ell+1}^{1/2} \xi_{\ell+1}(\omega) \end{bmatrix} $$
where $W_{\ell+1}=P_{\theta}^{T}W_{\ell}P_{\theta}$ is the mass matrix on the coarser level $\ell + 1$.
The random forcing term is defined as $\xi_{\ell+1}(\omega):=W_{\ell+1}^{-1/2}P_{\theta}^{T}W_{\ell}^{1/2}\xi_{\ell}(\omega)$. We note that $\xi_{\ell+1}(\omega)$ is a multivariate Gaussian vector with zero mean and covariance matrix
\begin{align*}
\operatorname{cov}( \xi_{\ell+1}(\omega) ) & = \mathbb{E}[\xi_{\ell+1}(\omega) \xi_{\ell+1}(\omega)^T ] \\
                                           & = 
    (W_{\ell+1}^{-1/2}P_{\theta}^{T}W_{\ell}^{1/2}) \mathbb{E}[\xi_{\ell}(\omega) \xi_{\ell}(\omega)^T](W_{\ell+1}^{-1/2}P_{\theta}^{T}W_{\ell}^{1/2})^T \\
    {} & = W_{\ell+1}^{-1/2}P_{\theta}^{T}W_{\ell}P_{\theta}W_{\ell+1}^{-1/2} = I,
\end{align*}
where we have exploited the fact that $\xi_{\ell}(\omega) \sim \mathcal{N}(0,I)$.

The two-level decomposition \eqref{eq:bileveld} then follows from \eqref{eq:lu-twolevel}$_3$ by noticing that $\theta_{\ell+1}(\omega)$ is the solution of the SPDE \eqref{eq:SPDE} discretized at level $\ell+1$ with random forcing term
$\xi_{\ell+1}(\omega)\sim\mathcal{N}(0,I)$.
\end{proof}

Thus given $\xi_{\ell}(\omega)\sim\mathcal{N}(0,I)$ we are able to efficiently sample the
Gaussian random field on both the fine and upscaled discretization using the SPDE sampler.

\subsection{Hierarchical SPDE Sampler Numerical Solution}
We now describe the solution procedure that we employ for the hierarchical SPDE sampler.
Starting with an unstructured mesh, a coarse problem is constructed by grouping 
together fine-grid elements using the graph partitioner METIS~\cite{metis} and  
the coarse finite element spaces are computed as described in detail in 
\cite{lashuk2012element, lashuk2014construction}. The same procedure is applied 
recursively so that a nested hierarchy of agglomerated meshes and coarse spaces is constructed.  
We will present results on general unstructured meshes, however the technique immediately 
translates to the case of nested refined meshes generated with uniform refinement 
by choosing the canonical interpolators. 

Given $\xi_{\ell}(\omega)$, we compute the realizations of the Gaussian field at levels $\ell$ and $\ell+1$ as follows. 
First we compute the sample for level $\ell+1$ by solving the saddle point system
$$\mathcal{A}_{\ell+1} U_{\ell+1}=\mathcal{P}^T \begin{bmatrix} 0 \\ -gW_{\ell}^{1/2} \xi_{\ell}\end{bmatrix},$$
using the methodology described in Section \ref{sec:spde_solver}.
Then we compute the sample for level $\ell$ by iteratively solving
\eqref{eq:linsys} with $\mathcal{P} U_{\ell+1}$ as the initial guess.

We conclude this section with a remark on the independence of different realizations of the random field 
for a particular spatial resolution generated by the hierarchical SPDE sampler.
Specifically, two realizations $\theta_{\ell}(\omega_i)$ and $\theta_{\ell}(\omega_j)$ are independent if and only 
if $\xi_{\ell}(\omega_i)$ and $\xi_{\ell}(\omega_j)$ are independent. This is of extreme importance for MLMC and  
requires the use of a quality random number generator. In our numerical experiments, 
we use Tina's Random Number Generator Library~\cite{trng}  
which is a pseudo-random number generator with dedicated support for 
parallel, distributed environments~\cite{bauke2007random}.

\subsection{Numerical Results}
The numerical results in this section are used to demonstrate the effectiveness
of our SPDE sampling approach. 
For the simulations, an absolute stopping criteria of $10^{-12}$ and a relative stopping 
criteria of $10^{-6}$ is used for the linear solver.
For the results presented in this and the following sections, we use the C++ finite element library MFEM~\cite{mfem} to assemble the discretized problems.

First we consider the performance of the sampler in two space dimensions. The physical domain 
$D=(0,1200)\times(0,2200)$ is embedded in a larger domain
$\overline D = (-100, 1300) \times (-100, 2300)$ to decrease the effect of the artificial Neumann boundary condition, see Section \ref{sec:boundaryArtifacts}. The computational domain is then discretized using a structured quadrilateral mesh with 294,400 elements in the physical domain. 
Then a hierarchy of unstructured agglomerated meshes is constructed using the graph partitioner METIS~\cite{metis} with a coarsening ratio of 8 elements per agglomerate, see Figure \ref{fig:2DSPE10MeshHierarchy}. Note that, on coarse levels, agglomerated elements have irregular shapes and an arbitrary number of neighboring elements. Coarse faces are also not flat.
Figure \ref{fig:2DSPE10Samples} shows a sequence of computed Gaussian random field samples $\theta(\x,\omega)$ on different levels with correlation length $b=100$.  Observe the similarity in the fields generated on the algebraically coarsened levels. Locations of the essential features of the fine grid sample are preserved, however contours are blurred and boundaries are spread out due to the irregular shape of the agglomerated elements.

We now investigate the performance of the hierarchical sampler under weak scaling, i.e. when the number of mesh elements is proportional to the number of processes. Structured coarsening is used to create the hierarchy of agglomerated meshes with a coarsening ratio of 4 elements per agglomerate, that is, the original mesh is uniformly refined to build the hierarchy of levels. 
The code was executed on \textit{Sierra} at Lawrence Livermore National Laboratory consisting of a total of 1,944 nodes where each node has two 6-core Xeon EP X5660 Intel CPUs (2.8 Ghz), and 24GB of memory. We use 8 MPI processes per node. Figure \ref{fig:2Dscaling} shows the number of MPI processes versus the average time to generate a realization of the Gaussian field for 1000 samples. The number of MPI processes ranges from 8 to 512 and size of the stochastic dimension on the fine grid ranges from 3.7 to 235 million. The proposed sampling method exhibits the optimal near linear scaling as the number of processes increases for all levels in the hierarchy.

 \begin{figure}[htbp]
     \centering
     \begin{tabular}{c c c c}
         \subfloat[Level $\ell=0$]{\includegraphics[scale=.14]{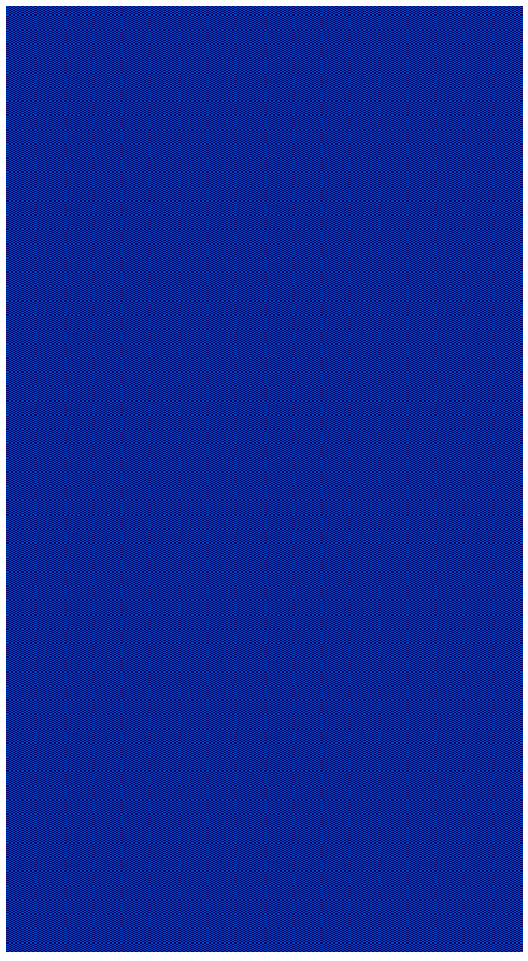}}
         &
         \subfloat[Level $\ell=1$]{\includegraphics[scale=.14]{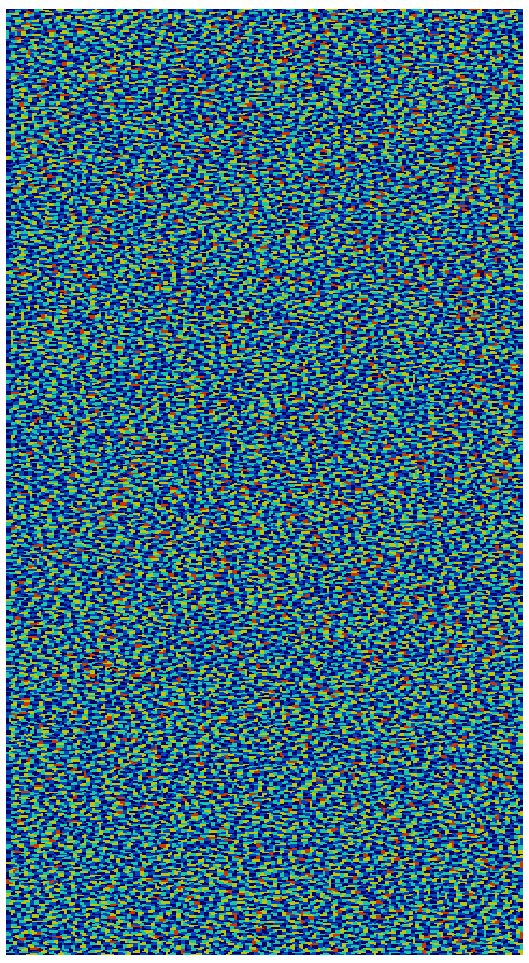}}
         &
         \subfloat[Level $\ell=2$]{\includegraphics[scale=.14]{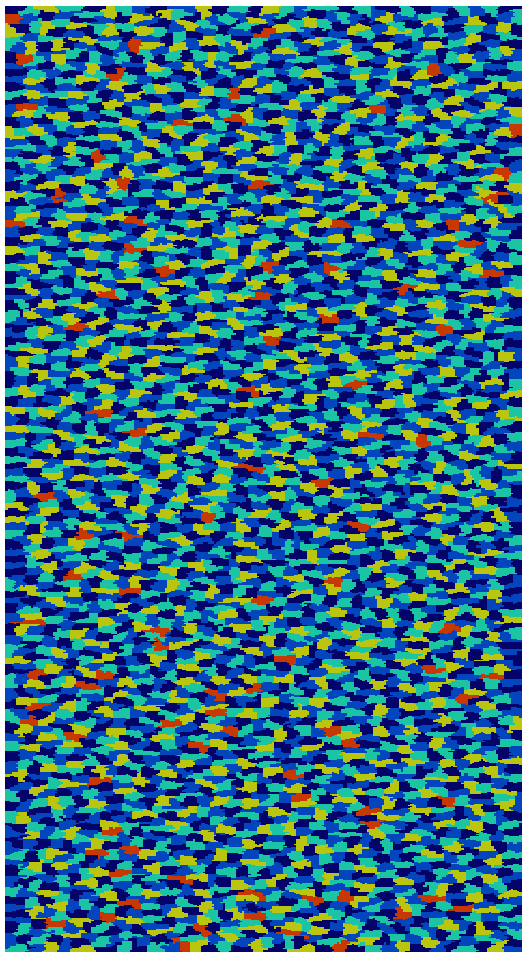}}
         &
         \subfloat[Level $\ell=3$]{\includegraphics[scale=.14]{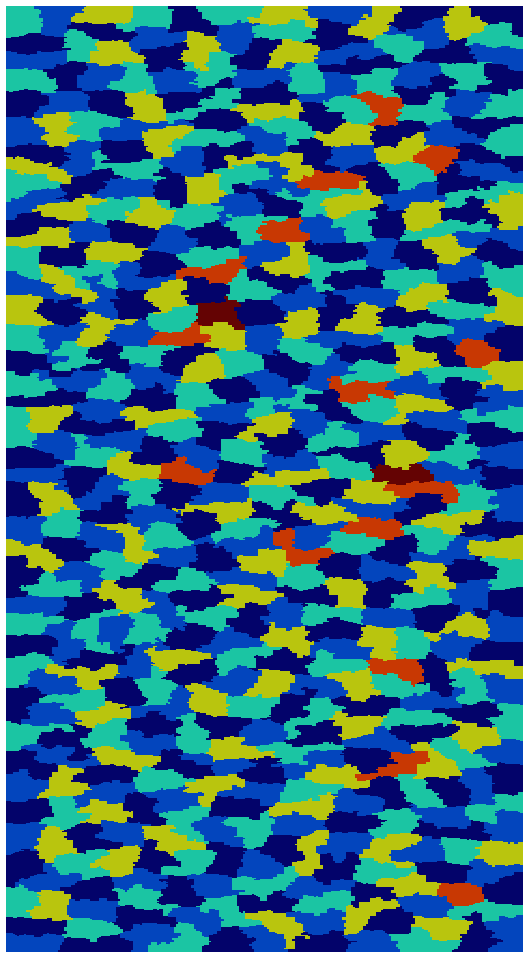}}
     \end{tabular}
     \caption{Nested hierarchy of agglomerated meshes of the domain
     \small{$D=(0,1200)\times(0,2200)$}.}
     \label{fig:2DSPE10MeshHierarchy}
 \end{figure}

 \begin{figure}[htbp]
     \centering
     \begin{tabular}{c c c c c}
         \subfloat[Level $\ell=0$,
         Size of Stochastic Dimension
     $=200K$]{\includegraphics[scale=.18]{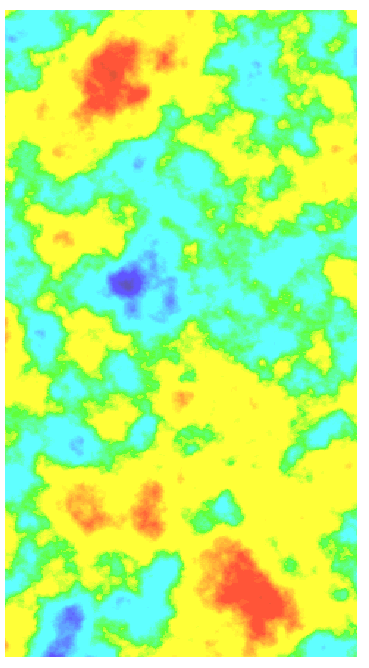}}
         &
         \subfloat[Level $\ell=1$,
         Size of Stochastic Dimension
     $=27K$]{\includegraphics[scale=.18]{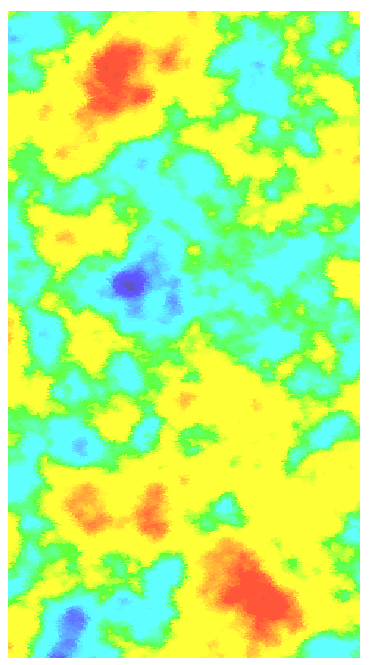}}
         &
         \subfloat[Level $\ell=2$,
         Size of Stochastic Dimension
     $=3.5K$]{\includegraphics[scale=.18]{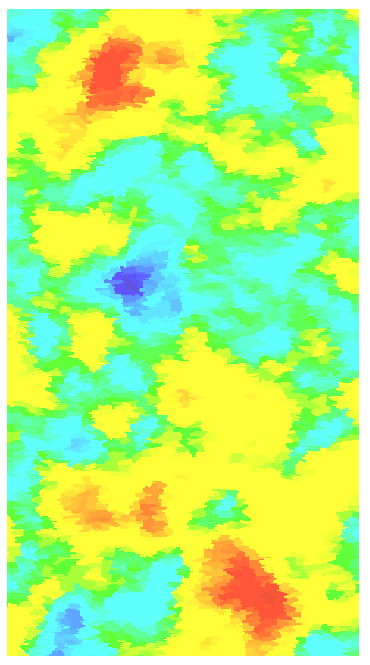}}
         &
         \subfloat[Level $\ell=3$,
         Size of Stochastic Dimension
     $=501$]{\includegraphics[scale=.18]{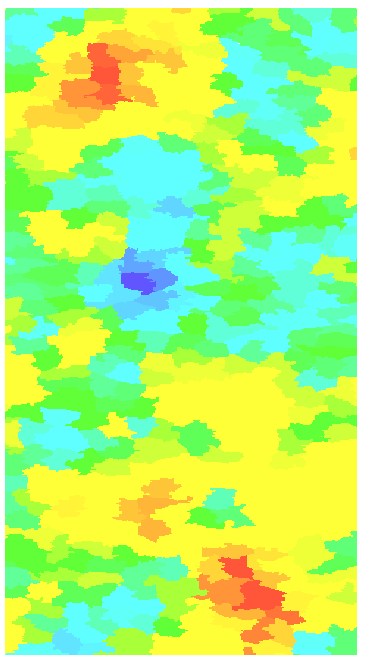}}
        &
         \subfloat{\includegraphics[scale=.15]{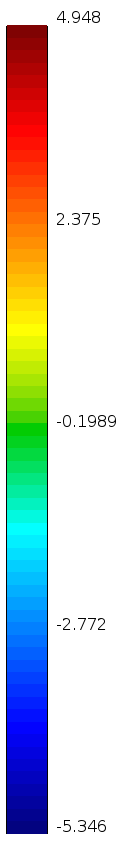}}
     \\\end{tabular}

     \caption{Realizations of Gaussian random fields on the domain $D=(0,1200)\times(0,2200)$ obtained by using our
     hierarchical sampling technique for 4 levels with \Mat covariance with correlation length $b=100$. }
     \label{fig:2DSPE10Samples}
 \end{figure}

 \begin{figure}[htbp]
     \centering
 \includegraphics[scale=.3]{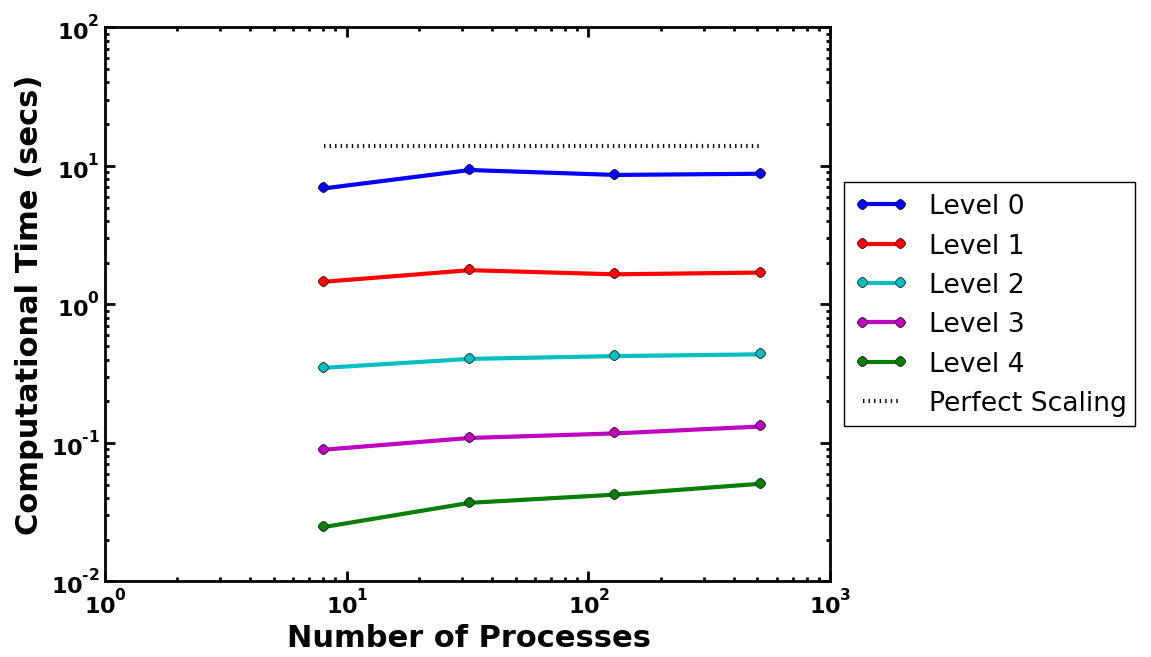}
 \caption{ Weak scalability of the hierarchical SPDE sampler on the domain $D=(0,1200)\times(0,2200)$ where the size of the stochastic dimension per process is fixed. The size of the stochastic dimension of the finest level ranges from 3.7 to 235 million and the number of MPI processes ranges from 8 to 512. The average time to compute a realization using the sampling method exhibits the desired near linear scaling for all levels.}
 \label{fig:2Dscaling}
\end{figure}

As a second example, we consider the generation of large-scale 3D spacially correlated random fields. The computational domain is inspired by one from the SAIGUP benchmark, \cite{saigup}, and represents a realistic geometry of a shallow marine hydrocarbon reservoirs. The computational domain roughly covers an area of $3000 \times 9000$ square meters, and the difference (in depth) between the highest and the lowest point in the domain is $930$ meters.
The original mesh consists of $78720$ hexahedral elements and it is uniformly refined several times to build the hierarchy of levels.
We note that in this example, we have not used embedding of the computational domain into a larger domain, since at this point we are only interested in the scalability of computing a single sample which is done by a scalable multilevel method.

Figure \ref{fig:3DSaigupSamples} shows a sequence of computed Gaussian random field samples $\theta(\x,\omega)$ on different levels (the finer level is obtained by uniformly refining the original mesh twice). To model $\theta(\x,\omega)$ we prescribe the exponential covariance function \eqref{eq:exp_cov_func} with the correlation length $b=100$ meters and unitary marginal variance. Observe the similarity in the fields generated at the different levels.
The weak scalability of the proposed sampling method is demonstrated in Figure \ref{fig:3Dscaling}. The number of MPI processes ranges from 48 to 3072 and the total number of degree of freedom (i.e. the number of unknowns in the mixed system \eqref{eq:linsys}) on the fine grid ranges from 2.1 to 134 million. The proposed sampling method exhibits fairly scalable behavior as the number of processors increases.  

 \begin{figure}[h!]
     \centering
     \begin{tabular}{c c c c}
         \subfloat[Level $\ell=0$,
         Size of Stochastic Dimension = $5M$]{\includegraphics[trim={430 110 400 200}, clip, scale=.22]{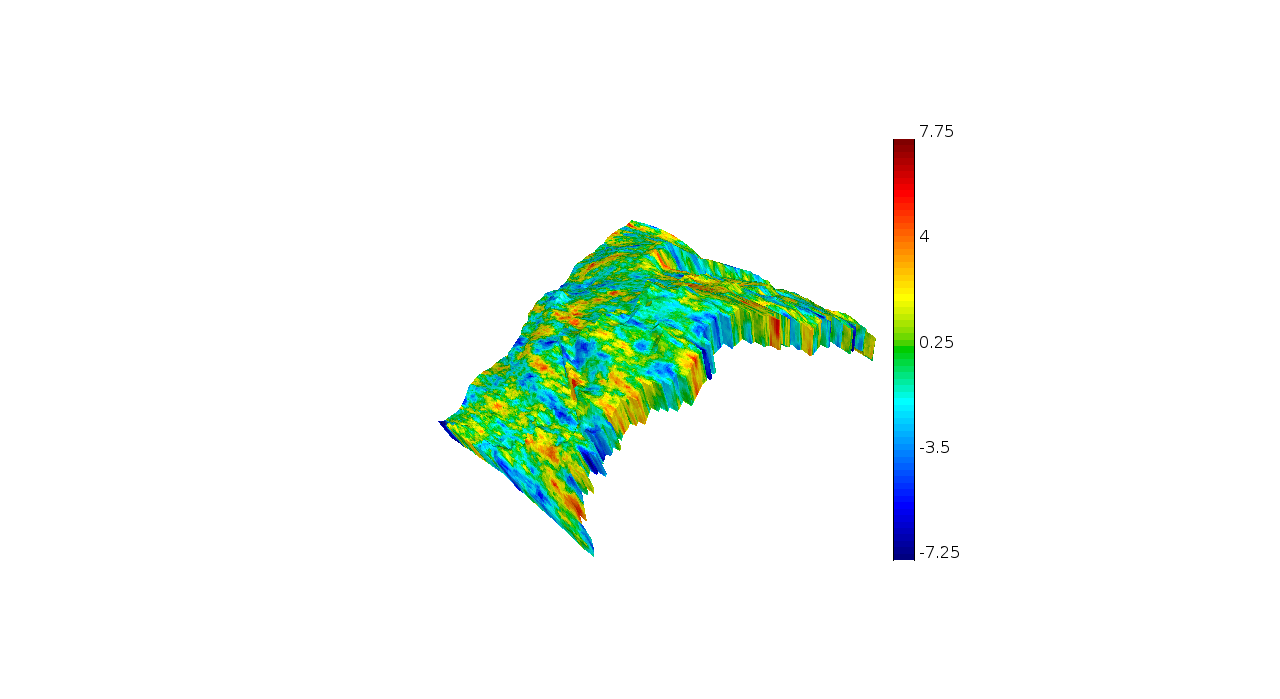}}
         &
         \subfloat[Level $\ell=1$,
         Size of Stochastic Dimension = $630K$]{\includegraphics[trim={430 110 400 200}, clip, scale=.22]{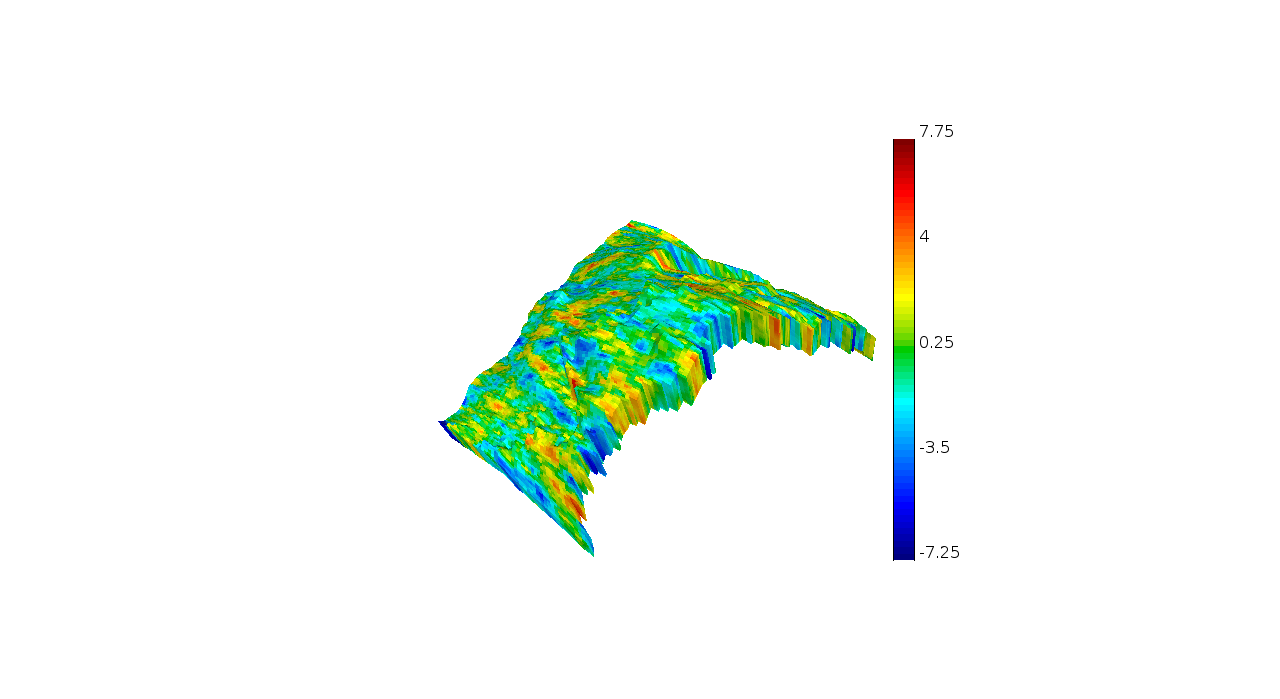}}
         &
         \subfloat[Level $\ell=2$,
         Size of Stochastic Dimension = $78K$]{\includegraphics[trim={430 110 400 200}, clip, scale=.22]{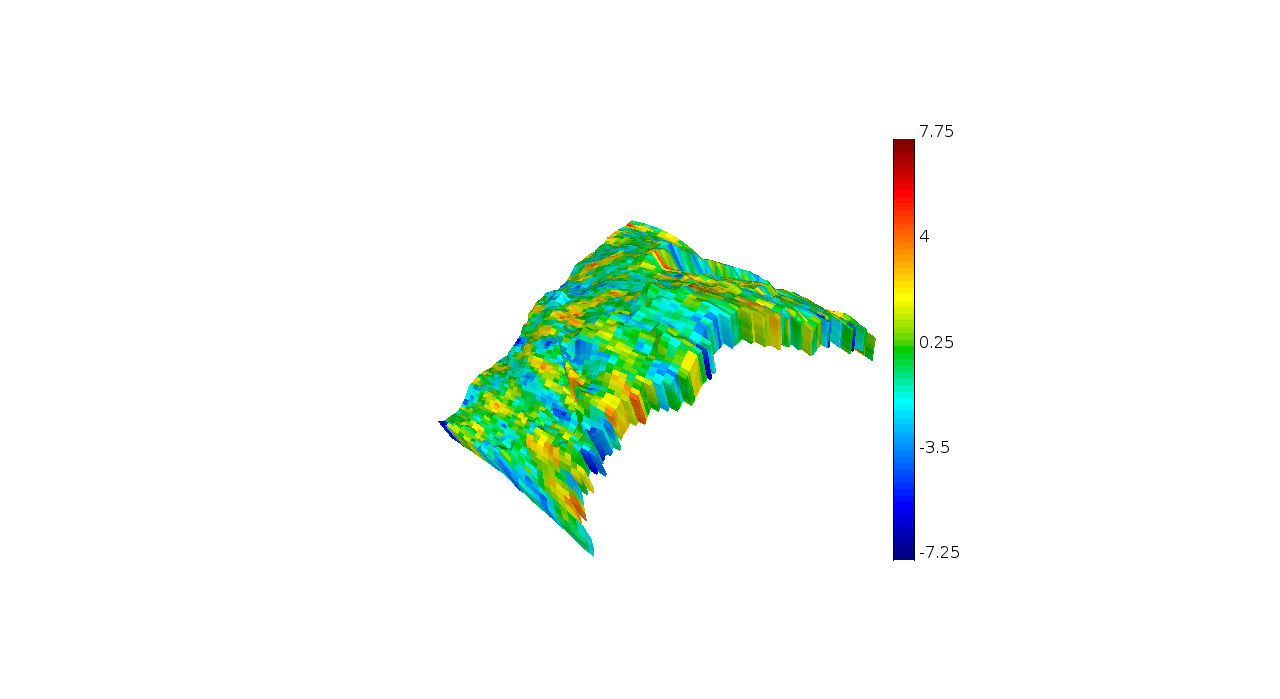}}
         &
         \subfloat{\includegraphics[trim={880 110 300 110}, clip, scale=.20]{Figures/SAIGUP_level2.png}}
 
     \\\end{tabular}
     \caption{Realizations of Gaussian random fields on the SAIGUP domain obtained by using our
     hierarchical sampling technique for 3 levels with exponential covariance function and correlation length $b=100$. }
     \label{fig:3DSaigupSamples}
 \end{figure}

 \begin{figure}[h!]
     \centering
 \includegraphics[width=.75\textwidth]{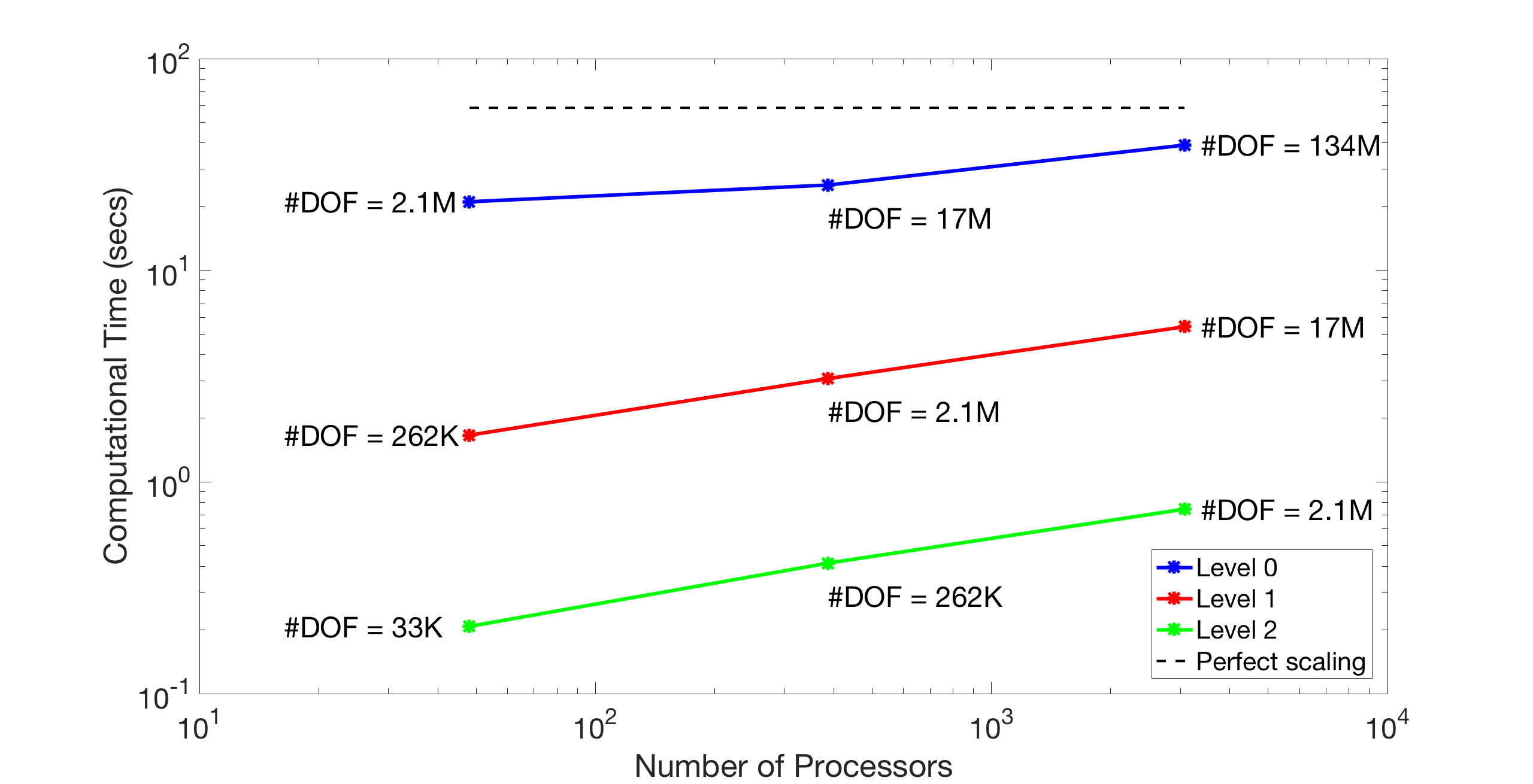}
 \caption{Weak scalability test of the hierarchical SPDE sampler for the 3D SAIGUP domain. The method exhibits good scaling properties for all levels. \#DOF denotes the total number of degree of freedom in the mixed system \eqref{eq:linsys}.}
 \label{fig:3Dscaling}
\end{figure}

\section{Multilevel Monte Carlo Methods}
\label{sec:mlmc}
In this section we briefly describe the Monte Carlo method, and then its
extension to the multilevel Monte Carlo method. Finally, we present results with MLMC simulations for subsurface porous media flow using our hierarchical SPDE sampling method.

In Monte Carlo methods, one is
interested in approximating the expected value of some quantity of interest, $Q(\omega)=\mathcal{B}\mathcal{U}(\x, \omega)$ where $\mathcal{U}({\bf x},\omega)$ is the solution of a PDE with random input coefficients. In our
case, $Q$ represents some functional of the solution $({\bq},p)$ to
\eqref{eq:Darcy1}-\eqref{eq:Darcy2}. In general, the quantity of interest, $Q$,
is inaccessible so an approximation, $Q_{h}$, is computed. The standard Monte
Carlo estimator for the quantity of interest is then
\begin{equation}
    \E[Q] \approx \hat{Q}_{h}^{MC} = \frac{1}{N}\sum_{i=1}^{N}Q_{h}^i,
\end{equation}
where $ Q_{h}^i$ is the $i^{\text{th}}$ sample of $Q_{h}$  
and $N$ is the number of (independent) samples.

The mean square error of the method is given by 
\begin{equation}
    \E[(\hat{Q}_h^{MC} - \E[Q])^2] = \frac{1}{N}\V[Q_{h}]  +
    (\mathbb{E}[Q_h -Q])^{2}. 
\end{equation}
For the root mean square error (RMSE) to be below a prescribed tolerance,
$\varepsilon$, both terms should be smaller than $\varepsilon^{2}/2$. 

The first term is the estimator variance and the second term is the estimator bias. The
estimator bias measures the discretization error and is controlled by the
spatial resolution of the approximate solution.
For sufficiently fine spatial discretizations, 
the estimator bias is small and reducing the mean square error amounts to
reducing the estimator variance.
The estimator variance is then reduced by
increasing the number of samples, $N$. Thus, the standard MC method RMSE converges
in $\bigO(1/\sqrt{N})$. This is favorable as the convergence rate in independent of the stochastic 
dimension of the problem, yet when high accuracy is necessary the number of samples required 
can be a prohibitive expense as the samples must be computed with a fine mesh. For this reason, standard MC methods
are not a scalable approach for the forward propagation of uncertainties when the forward problem is a PDE, due to the
high cost of computing samples with a fine spatial discretization.

This motivates the multilevel Monte Carlo (MLMC) method, see e.g. \cite{cliffe2011multilevel,giles2008multilevel},
which is an effective variance reduction technique based on
hierarchical sampling which aims to alleviate the burden of standard MC by computing samples
on a hierarchy of spatial discretizations.  

Consider the sequence of spatial discretizations 
$\ell=\{0,1,\ldots L\}$, with level $\ell=0$ denoting the finest spatial discretization, 
and level $\ell=L$ the coarsest used to approximate $Q_{\ell}$ where $Q_0 = Q_h$.

The idea behind MLMC is to estimate the correction with respect to the next
coarser level, $\E[Q_{\ell}-Q_{\ell+1}]$ rather than estimating
$\E[Q_{\ell}]$ directly. Linearity of the expectation gives the following
expression for $\E[Q_{h}]$ on the finest level,
\begin{equation}
    \E[Q_{h}] = \E[Q_{L}] +
    \sum_{\ell=0}^{L-1}\mathbb{E}[Q_{\ell}-Q_{\ell+1}] = \sum_{\ell=0}^{L}
    \mathbb{E}[Y_{\ell}], 
\end{equation}
where we have defined $Y_{\ell}= Q_{\ell}-Q_{\ell+1}$ for $i=0,\ldots L-1$ and
$Y_{L}=Q_{L}$.

Similarly, one computes the estimator for $Y_{\ell}$,
\begin{equation}
    \hat{Y}_{\ell}= \frac{1}{N_{\ell}}\sum_{i=1}^{N_{\ell}} \left(
    Q^{(i)}_{\ell}-Q^{(i)}_{\ell+1} \right),
\end{equation}

and then the multilevel Monte Carlo estimator is defined as
\begin{equation}
    \hat{Q}_{h}^{ML} =\sum_{\ell=0}^{L} \hat{Y}_{\ell}.
    \label{eq:MLMCEstimator}
\end{equation}

Consequently, the mean square error for the MLMC method becomes
\begin{equation}
    e(\hat{Q}_{h}^{ML})^{2} = \frac{1}{N_{L}}\mathbb{V}[Q_{L}]
    +\sum_{\ell=0}^{L-1}\frac{1}{N_{\ell}}\mathbb{V}[Y_{\ell}] +
    (\mathbb{E}[Q_{h}-Q])^{2}.
    \label{eq:MLMCMSE}
\end{equation}
The three terms in the right hand side of \eqref{eq:MLMCMSE} represent,
respectively, the variance on the coarsest level, the variance of the
correction with respect to the next coarser level, and lastly the discretization
error. For a prescribed level of accuracy, the number of realizations at the coarsest level, $N_L$, still needs to be large, but samples are much cheaper 
to obtain on the coarser level, and the number of realizations required for levels $(\ell < L)$ given by $N_{\ell}$ is much smaller, since $\V[Q_{\ell}-Q_{\ell+1}] \rightarrow 0$ as $h_{\ell} \rightarrow 0$. Thus, fewer samples are needed for the finest, most computationally expensive level.  
To minimize the overall cost of the MLMC algorithm (e.g. the computational time to reach a desired MSE), the optimal number of samples of each level $N_{\ell}$ is given by
$$ N_{\ell} \propto \sqrt{\frac{\mathbb{V}[Y_{\ell}]}{C_{\ell}}},$$
where $C_{\ell}$ is the cost of computing one sample at level $\ell$.
We refer to \cite{cliffe2011multilevel} for additional details.

\subsection{Numerical Experiments}
In this section we include
standard results from MLMC computations using our proposed SPDE sampler. The forward model is the mixed Darcy equations given by 
\begin{equation}
    \begin{array}{lcr}
        \frac{1}{k(\x,\omega)}\bq(\x,\omega) + \nabla p = 0 & \mbox{ in } D \\
        \nabla \cdot \bq = 0 & \mbox{ in } D,
    \end{array}\label{eq:darcy}
\end{equation}
with homogeneous Neumann boundary conditions $\bq \cdot \bn = 0$ on $\Gamma_N$ and Dirichlet boundary conditions $p = p_D$ on $\Gamma_D$. Here, $\Gamma_N \in \partial D$, $\Gamma_D \in \partial D$ are a non overlapping partition of $\partial D$, and $\bn$ denotes the unit normal vector to $\partial D$. 

We use the mixed finite element method to discretize the model problem \eqref{eq:darcy}, specifically we choose the lowest order Raviart-Thomas element for the flux $\bq$ and piecewise constant functions for the pressure $p$.
Then, for each input realization $k(\x,\omega)$, we write the resulting discretized saddle point problem as 
\begin{equation}\label{eq:mixedDarcyDiscrete}
\mathcal{A}_{k,h}
    \begin{bmatrix}
        \bq_h \\ 
        p_h
    \end{bmatrix}
    =
    \begin{bmatrix}
        \bbf_h \\ 
        0
    \end{bmatrix},
\quad \text{where }
\mathcal{A}_{k,h} =
\begin{bmatrix}
        M_{k,h} & B_h^T \\
        B_h & 0
 \end{bmatrix}.
\end{equation}
Here $\bbf_h$ stems from the discretization of the Dirichlet boundary condition $p = p_D$ on $\Gamma_D$.
The large sparse indefinite linear system \eqref{eq:mixedDarcyDiscrete} is solved using MINRES preconditioned with a block diagonal preconditioner based on the pressure Schur Complement, namely the $L^2-H^1$ preconditioner described in \cite{MardalWinther2011}. Specifically, we consider the symmetric positive definite preconditioner
\begin{equation*}
\mathcal{N}_{k,h} = 
\begin{bmatrix}
H_{k,h} & 0 \\
0 & \widetilde{\Sigma}_{k,h}
\end{bmatrix},
\end{equation*}
where $H_{k,h} = \text{diag}(M_{k,h})$ and $\widetilde{\Sigma}_{k,h} = B_h H^{-1}_{k,h} B_h^T$.

It is well-known that $\mathcal{N}_{k,h}$ is a robust preconditioner for \eqref{eq:mixedDarcyDiscrete} as long as $M_{k,h}$ is not too anisotropic. In fact, as shown in \cite{murphy2000note}, $\mathcal{N}_{k,h}$ is an optimal preconditioner for $\widetilde{\mathcal{A}}_{k,h}$,
\begin{equation*}
\widetilde{\mathcal{A}}_{k,h} = 
    \begin{bmatrix}
        H_{k,h} & B_h^T \\
        B_h & 0
\end{bmatrix},
\end{equation*}
and $\widetilde{\mathcal{A}}_{k,h}$ is spectrally equivalent to $\mathcal{A}_{k,h}$, since the Raviart-Thomas finite element matrix $M_{k,h}$ is spectrally equivalent to its diagonal $H_{k,h}$.

In the computations, we use BoomerAMG from hypre~\cite{hypre} to precondition the Schur complement $\widetilde{\Sigma}_{k,h}$ which is explicitly available and sparse.
It should be noted that the AMG preconditioner of $\widetilde{\Sigma}_{k,h}$ is recomputed for each input realization.
For the simulations, an absolute stopping criteria of $10^{-12}$ and a relative stopping 
criteria of $10^{-6}$ is used for the linear solver.  

\subsubsection{Top Layer of SPE10 Dataset}
First we show experiments incorporating data from the Tenth SPE Benchmark (SPE10)~\cite{spe_data}. We consider a 2D slice of the dataset of dimension $1200 \times 2200\, {\rm ft}^2$ divided into cells of size $20 \times 10\,  {\rm ft}^2$ resulting in a mesh with $60 \times 220$ quadrilateral elements. 
The PDE coefficient on each slice is a scalar function. The original $60 \times 220$ quadrilateral mesh corresponds to the coarsest one in our MLMC experiments.
To produce the other (finer) levels, we uniformly refine the initial 2D mesh several times.

We have $D = (0,1200) \times (0,2200)$ and assume the random conductivity coefficient $k(\x,\omega)$ is modeled as a log-normal random field.  
A realization of $k(\x,\omega)$ is generated by computing the exponential of a realization of a Gaussian random field. In particular, we assume that the mean of the Gaussian random field is the logarithm of the top horizontal slice from the SPE10 dataset. 

Figure \ref{subfig:kappa} shows a particular realization of the random conductivity coefficient $k(\x,\omega)$ modeled as a log-normal field where 
$k(\x,\omega) = \operatorname{exp}\brac{{\operatorname {log}[k_{SPE10slice}(\x)] + \theta(\omega)}} $ where $\theta(\omega)$ is a realization of the Gaussian random field generated with our sampler shown in Figure \ref{subfig:real} and $k_{SPE10slice}(\x)$ is shown in Figure \ref{subfig:kinv}.

 \begin{figure}[htbp]
     \begin{tabular}{c c c}
         \subfloat[ Top layer of SPE10]{\includegraphics[scale=.28]{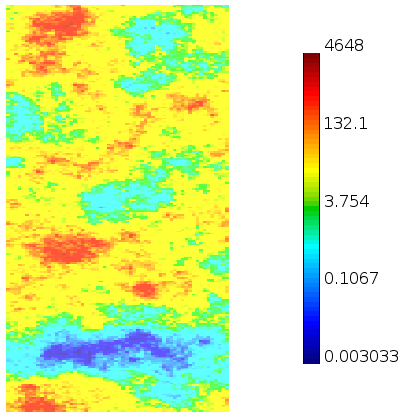}\label{subfig:kinv}} 
         &
     \subfloat[Realization of Gaussian random field $\theta(\omega)$]{\includegraphics[scale=.28]{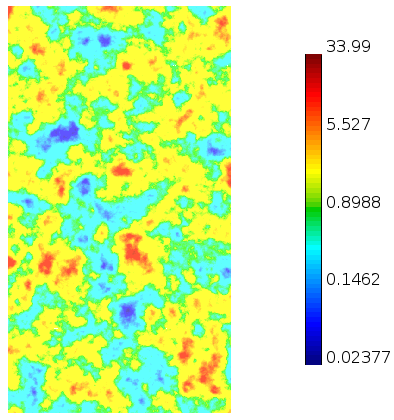}\label{subfig:real}}
         & 
     \subfloat[Realization of conductivity coefficient $k(\x,\omega)$ ]{\includegraphics[scale=.28]{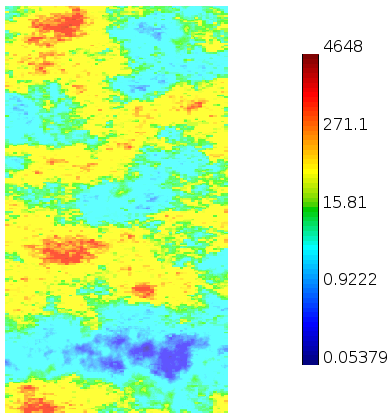}\label{subfig:kappa}}
         
     \\\end{tabular}
     \caption{A log-normal realization of the random conductivity coefficient $k(\x,\omega)$ shown in (c) is computed as the product of the top layer of the SPE10 dataset and the exponential of a realization of a Gaussian random field obtained by using our
     hierarchical sampling technique (logarithmic scale).}
     \label{fig:SPE10_real}
 \end{figure}

We solve \eqref{eq:darcy} with the following boundary conditions:

    \begin{equation*}
        \begin{cases}
            -p&=1 \quad \text{on} \ \Gamma_{in} := (0,1200) \times \curly{0}\\
            -p&=0 \quad \text{on} \ \Gamma_{out} := (0,1200) \times \curly{2200} \\
            \bq \cdot {\bf n} &= 0\quad  \text{on} \ \Gamma_{s} := \partial D \setminus \left(\Gamma_{in} \cup \Gamma_{out}\right). \\
        \end{cases}
    \end{equation*}

The quantity of interest we use is the expected value of the
effective permeability, that is the (horizontal) flux through the ``outflow''
part of the boundary, defined as
\begin{equation}
    k_{eff}(\omega) = \frac{1}{\abs{\Gamma_{out}}} \int_{\Gamma_{out}} \bq 
    (\cdot,\omega)\cdot{\bf n} \, dS.
\label{eq:qoi}
\end{equation}

Figure \ref{fig:SPE10_ml} contains four subplots relating to the multilevel estimator and performance of the multilevel Monte Carlo method with hierarchical, SPDE sampling. The target mean square error is $\epsilon^2 = 8.73e$-5 for Figures \ref{subfig:SPE10_mean}-\ref{subfig:SPE_solve}. The first figure, Figure \ref{subfig:SPE10_mean},
displays the multilevel estimator where the blue line with circles represents the
expectation at each level $\mathbb{E}[Q_{\ell}]$ and the green dashed line represents the expectation of the difference in
levels, $\mathbb{E}[Q_{\ell}-Q_{\ell+1}]$.
Figure \ref{subfig:SPE10_var} illustrates the multilevel variance reduction. This plot
contains two lines, the blue line represents the variance of the particular level,
whereas the green dashed line represents the variance of the difference in levels. 
The plot shows the effectiveness of the MLMC method at reducing
the variance as the number of unknowns increases. 
The average sampling time to generate the required Gaussian field realizations and solve the forward model for each level is shown in Figure
\ref{subfig:SPE_solve}. 
This plot indicates near optimal scaling of the MLMC method
with the proposed hierarchical sampler.
Figure \ref{subfig:DofsvSamps} shows the number of samples required at each
level of the MLMC method for different prescribed
mean square error tolerances. The plot clearly shows that more samples are
generated on the coarse levels (fewer degrees of freedom) than on the finest
levels (many degrees of freedom). This merely confirms the MLMC theory with our
proposed hierarchical sampling technique.

\begin{figure}[htbp]
\begin{center}
\begin{minipage}{.49\textwidth}
    \subfloat[Multilevel
    estimator]{\includegraphics[scale=.3]{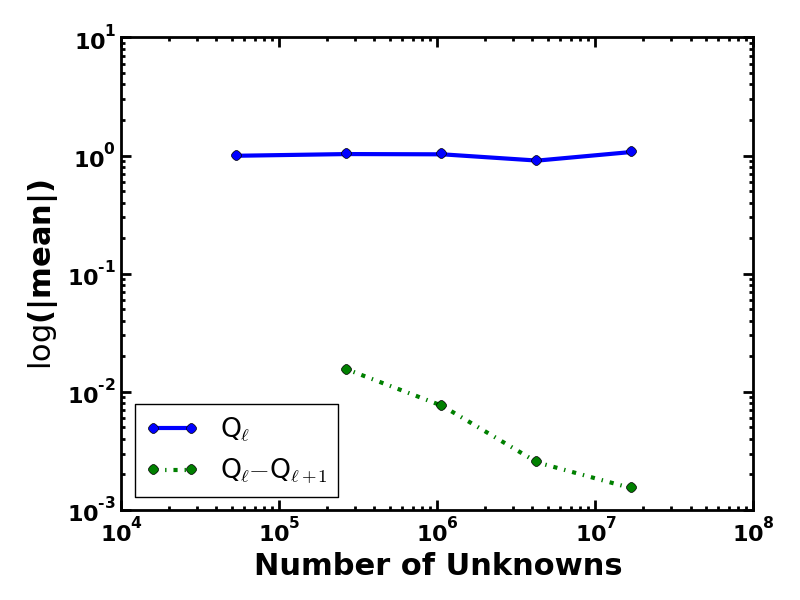}\label{subfig:SPE10_mean}}
\end{minipage}
\begin{minipage}{.49\textwidth}
    \subfloat[Multilevel variance
    reduction]{\includegraphics[scale=.3]{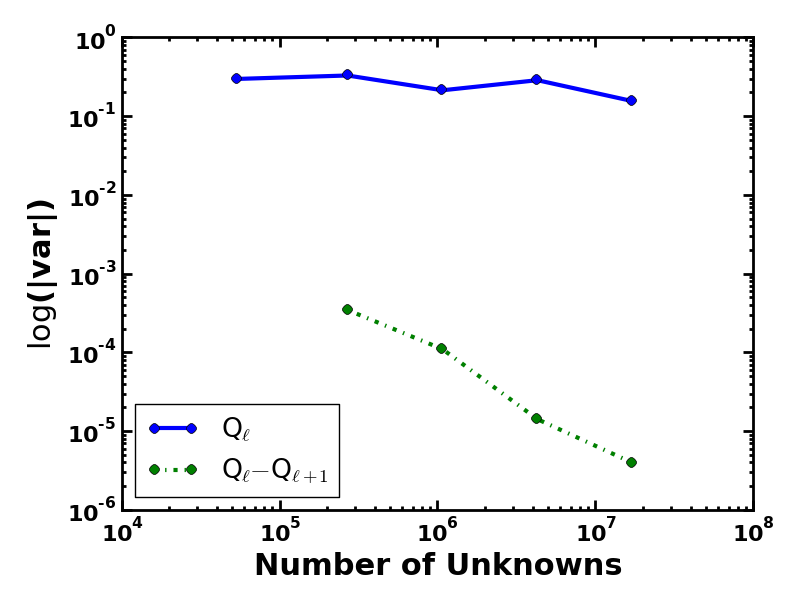}\label{subfig:SPE10_var}}
\end{minipage}
\begin{minipage}{.49\textwidth}
        \subfloat[Average sample time versus number of
        unknowns]{\includegraphics[scale=.3]{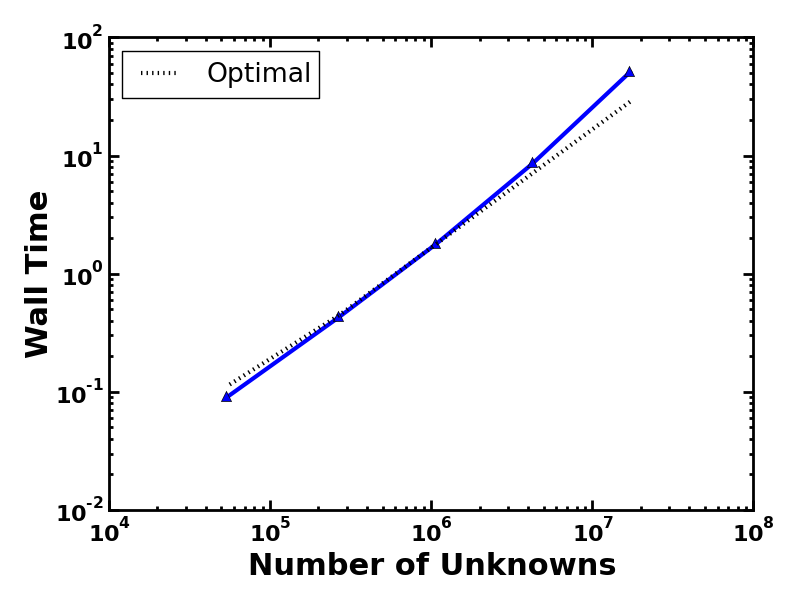}\label{subfig:SPE_solve}}
\end{minipage}
\begin{minipage}{.49\textwidth}
    \subfloat[Number of samples for each level for varying MSE
        tolerances]{\includegraphics[scale=.3]{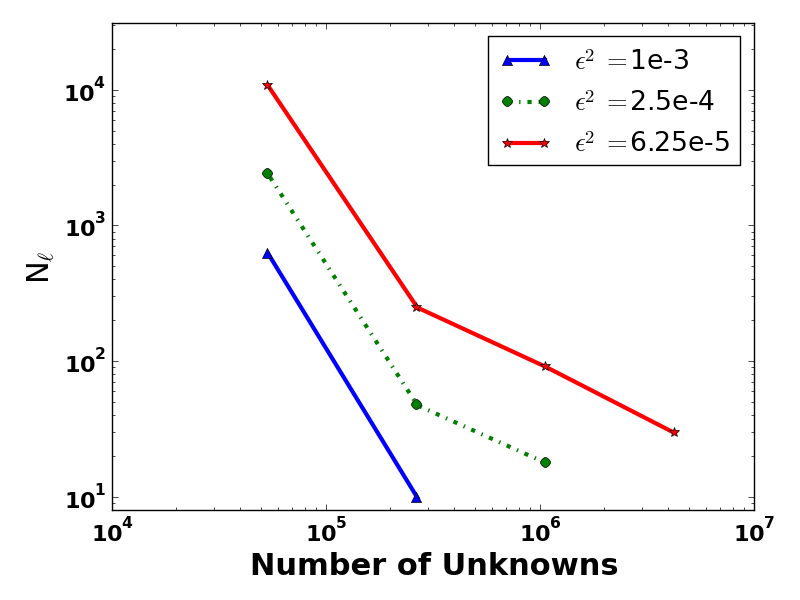}\label{subfig:DofsvSamps}}
    \end{minipage}

    \end{center}
    \caption{Using the SPE10 data with the computational domain $D = (0,1200) \times (0,2200)$, the effective permeability is estimated using MLMC. The plots (a)-(c) show the MLMC estimator, variance reduction, and average sampling time to generate the required Gaussian field realizations and solve the forward model for each level versus the number of unknowns where the target mean square error is $\epsilon=8.72e$-5. The number of samples $N_{\ell}$ per levels is shown in (d) for varying target MSE accuracy levels.}
\label{fig:SPE10_ml}
\end{figure}

\subsubsection{Unit Cube}
\label{sec:num_cube}
In this section we present similar experiments for estimating the expectation of the effective permeability, but
for the unit cube domain $D=(0,1)\times (0,1) \times (0,1)$. 

We solve \eqref{eq:darcy} with the following boundary conditions:

    \begin{equation*}
        \begin{cases}
            -p&=1 \quad \text{on} \ \Gamma_{in} := (0,1)\times(0,1)\times \curly{0}\\
            -p&=0 \quad \text{on} \ \Gamma_{out} := (0,1)\times(0,1)\times \curly{1} \\
            \bq \cdot {\bf n} &= 0\quad  \text{on} \ \Gamma_{s} := \partial D \setminus \left(\Gamma_{in} \cup \Gamma_{out}\right). \\
        \end{cases}
    \end{equation*}
The quantity of interest is the expected value of the
effective permeability defined in \eqref{eq:qoi}.
The original mesh consists of 64 hexahedral elements and is uniformly refined several times to build the hierarchy of levels.
We examine the performance of the multilevel estimator for the unit cube in Figure \ref{fig:cube_ml} which contains four subplots. 
The first figure, Figure \ref{subfig:cube_mean},
displays the multilevel estimator where the blue line with circles represents the
expectation at each level $\mathbb{E}[Q_{\ell}]$ and the green dashed line represents the expectation of the difference in
levels, $\mathbb{E}[Q_{\ell}-Q_{\ell+1}]$.

Figure \ref{subfig:cube_var} illustrates the multilevel variance reduction of the method, while 
Figure \ref{subfig:cube_solve} shows the average sampling time required to generate the Gaussian random field realizations and solve the forward model problem for each level. The method with hierarchical, SPDE sampling exhibits near optimal scaling for the 3D problem formulation.
The number of samples required at each
level of the MLMC method for different prescribed
mean square error tolerances is shown in \ref{subfig:cube_nsamples}. 
\begin{figure}[htbp]
    \begin{center}
\begin{minipage}{.49\textwidth}
    \subfloat[Multilevel
    estimator]{\includegraphics[scale=.3]{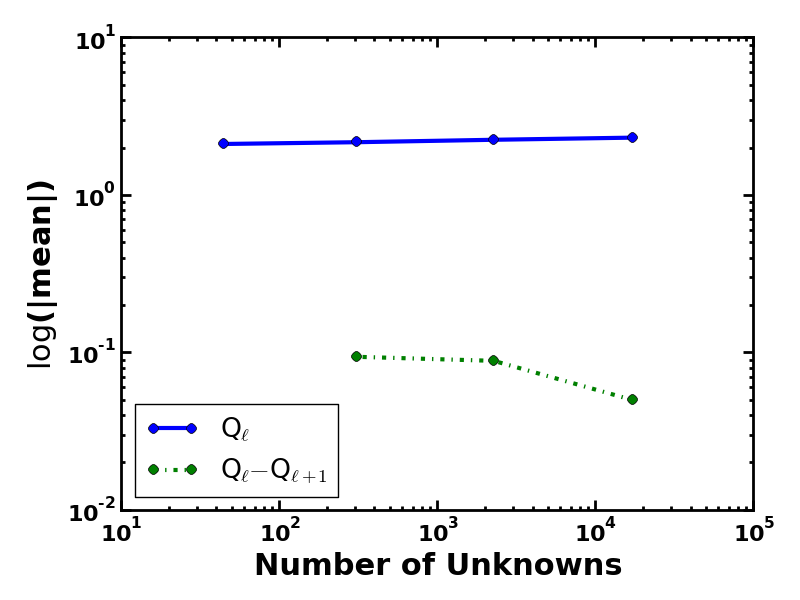}\label{subfig:cube_mean}}
\end{minipage}
\begin{minipage}{.49\textwidth}
    \subfloat[Multilevel variance
    reduction]{\includegraphics[scale=.3]{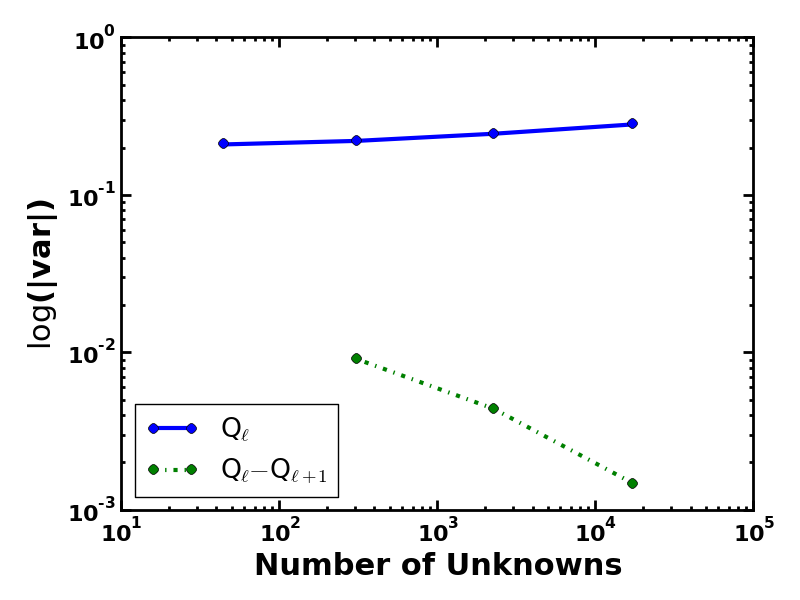}\label{subfig:cube_var}}
\end{minipage}
\begin{minipage}{.49\textwidth}
        \subfloat[Average sample time versus number of
        unknowns]{\includegraphics[scale=.3]{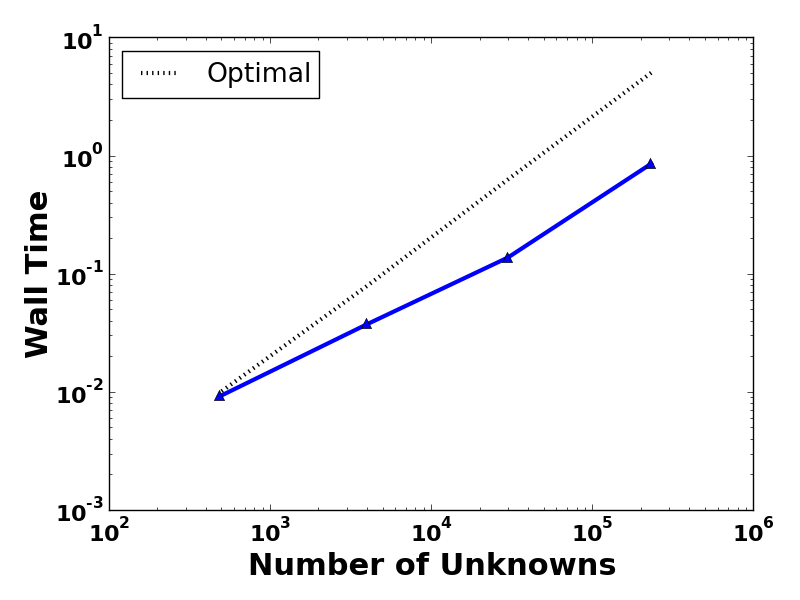}\label{subfig:cube_solve}}
\end{minipage}
\begin{minipage}{.49\textwidth}
        \subfloat[Number of samples for each level for varying MSE tolerance.]{\includegraphics[scale=.3]{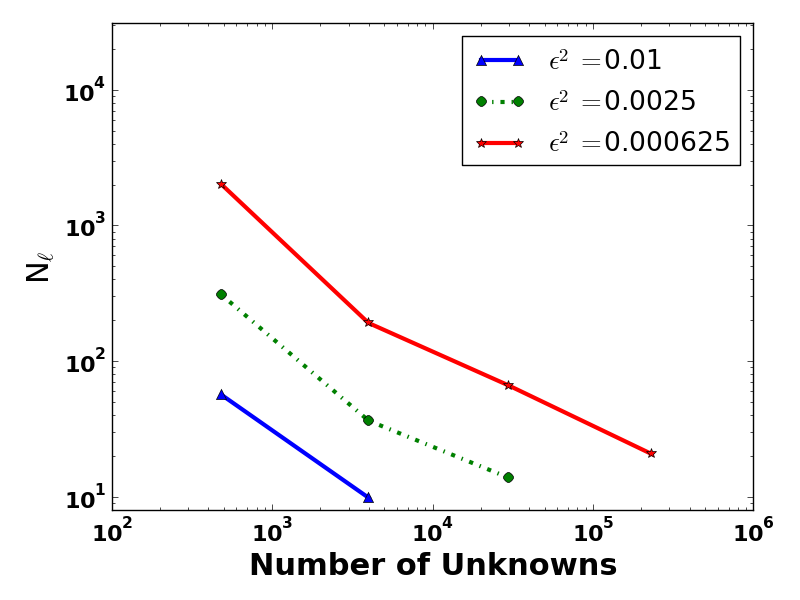}\label{subfig:cube_nsamples}}
\end{minipage}
\end{center}
\caption{The effective permeability is estimated using MLMC with the unit cube as the computational domain. The MLMC estimator, variance reduction, and average sampling time to generate the Gaussian random field realizations and solve the forward model problem for each level for the target mean square error of $\epsilon=6.25e$-5 are shown in plots (a)-(c) and the number of samples for each level of the MLMC method for varying target MSE is shown in plot (d).}
\label{fig:cube_ml}
\end{figure}

For both of the examined computational domains, the hierarchical, SPDE sampler yields the expected results for MLMC variance reduction and displays the desired scaling properties for the possibility of large-scale MLMC simulations.
 
\section{Conclusions}
\label{sec:conclusions}
Multilevel Monte Carlos simulations for PDEs with uncertain input coefficients employ a hierarchy of spatial resolutions as a variance reduction technique for the approximation of expected quantities of interest.
A key component in the multilevel Monte Carlo method is the ability to generate samples of a random field at different spatial resolutions.
The \KL expansion provides a parametrization independent of the spatial discretization. However, both the computation and the memory requirements become infeasible at large-scale as the expansion requires the ability to compute and store eigenpairs of a large, dense covariance matrix.
We suggest a sampling method based on the solution of a particular stochastic PDE.
This method is highly scalable, but the parametrization is mesh dependent.
We have proposed a multilevel decomposition of the stochastic field to allow for scalable, hierarchical stochastic PDEs samplers. Numerical results are provided that suggest the method possesses the desired scalability as the method leverages existing scalable solvers. We also have applied the new sampling technique to MLMC simulations of subsurface flow problems with over 10 million parameters in the stochastic dimension.

\bibliographystyle{siamplain}

\end{document}